\documentclass[11pt,reqno]{amsart}
\usepackage{amsmath,amsthm,amsfonts,amssymb,xcolor,comment,hyperref,fontawesome}
\usepackage{graphicx}
\usepackage{setspace}
\usepackage{mathrsfs}
\usepackage{hyperref}
\usepackage[letterpaper,margin=1.3in]{geometry}

\usepackage[shortlabels]{enumitem}

\usepackage{comment}

\usepackage[
    backend=biber,
    style=numeric,
    doi=false,isbn=false,url=false,eprint=false
    ]{biblatex}
\renewcommand{\epsilon}{\varepsilon}
\renewcommand{\hat}{\widehat}
\renewcommand{\tilde}{\widetilde}

\def\XXint#1#2#3{{\setbox0=\hbox{$#1{#2#3}{\int}$}
     \vcenter{\hbox{$#2#3$}}\kern-.5\wd0}}

\newtheorem{theorem}{Theorem}[section]
\newtheorem*{theorem*}{Theorem}
\newtheorem{proposition}[theorem]{Proposition}
\newtheorem{lemma}[theorem]{Lemma}
\newtheorem{corollary}[theorem]{Corollary}

\theoremstyle{definition}
\newtheorem{definition}[theorem]{Definition}
\newtheorem{construction}[theorem]{Construction}

\theoremstyle{remark}
\newtheorem{remark}[theorem]{Remark}

\numberwithin{equation}{section}

\newcommand{\bbG}{\mathbb{G}}
\newcommand{\bbR}{\mathbb{R}}

\title[A Lipschitz curve that is purely $C_{H}^{1}$-unrectifiable]{A Lipschitz curve in a Carnot group that is purely unrectifiable by smooth horizontal curves}

\author[Gareth Speight]{Gareth Speight}
\address{Department of Mathematical Sciences, University of Cincinnati, 2815 Commons Way, Cincinnati, OH 45221, United States}
\email{Gareth.Speight@uc.edu}

\author[Scott Zimmerman]{Scott Zimmerman}
\address{Department of Mathematics, 
The Ohio State University,
100 Math Tower, 231 West 18th Avenue, Columbus, OH 43210, United States}
\email{zimmerman.416@osu.edu}

\subjclass{53C17, 58C25}

\allowdisplaybreaks

\begin{document}

\begin{abstract}
We construct a Lipschitz curve in the free Carnot group of step 3 with 2 generators that meets every $C^{1}$ horizontal curve in a set of measure zero. This shows that the $C^{1}_{H}$-Lusin property fails in a strong sense in this group, and we deduce that such a curve must be purely $C^1_H$ 1-unrectifiable. Hence 1-rectifiability in Carnot groups is wildly different to its counterpart in Euclidean spaces, wherein the Whitney Extension Theorem guarantees that Lipschitz rectifiability and $C^1$ rectifiability are equivalent.
\end{abstract}

\keywords{Rectifiability, Lusin approximation, pliability, free Carnot group}

\date{\today}

\maketitle

\section{Introduction}

A set $E$ in a metric space $X$ is {\em $k$-rectifiable} if it can be covered by a countable union of Lipschitz images of measurable subsets of $\mathbb{R}^k$ up to a set of $\mathcal{H}^k$-measure zero, where $\mathcal{H}^k$ denotes $k$-dimensional Hausdorff measure.
This is a metric analogue of a smooth manifold, where Lipschitz maps play the role of ``non-smooth charts''. On the other end of the rectifiability spectrum, a set $E \subset X$ is {\em purely $k$-unrectifiable} if the intersection of $E$ with {\em any} Lipschitz image of a measurable subset of $\mathbb{R}^k$ has $\mathcal{H}^k$-measure zero. 
A nontrivial example of a purely 1-unrectifiable set in $\mathbb{R}^2$ is the four-corner Cantor set \cite{Gar70}.

For subsets of $\mathbb{R}^n$, an equivalent definition of $k$-rectifiability is obtained by using $C^{1}$ images of open subsets of $\mathbb{R}^{k}$. This equivalence follows from Kirszbraun’s Extension Theorem and the classical Lusin and Whitney Extension Theorems (see \cite[Theorem~3.1.16]{Fed69} or Lemma~\ref{l-Federer} below). Other equivalent definitions of rectifiability exist for sets in $\mathbb{R}^n$ using, for example, approximate tangent planes \cite{EG15}, $\mathcal{H}^k$-density \cite{P87}, projections \cite{B64}, or Jones' square functions \cite{AT15}. Often $k$-rectifiability is easiest to understand when $k=1$. For example, in any complete metric space,
a closed, connected set with finite $\mathcal{H}^1$-measure must be 1-rectifiable \cite[Theorem~4.4.8]{AT04}.

A Carnot group is a Lie group whose Lie algebra is nilpotent and stratified. They arise naturally in the study of local properties of sub-Riemannian manifolds, are important in control theory, and are rich objects of study in their own right \cite{BLU07, Mon02}. Carnot groups possess a great deal of geometric structure; they admit translations, dilations, a Haar measure, and a geodesic distance defined by minimizing lengths of {\em horizontal} (or {\em admissible}) curves.
Horizontal curves are those that lie tangent to the first layer of the stratification
and are particularly important to a Carnot group's geometry. 
While every Carnot group is diffeomorphic to a Euclidean space as a smooth manifold, they are wildly different in their metric and geometric properties.
For example, 
the first Heisenberg group $\mathbb{H}$ is diffeomorphic to $\mathbb{R}^3$, but
every set in $\mathbb{H}$ is purely $k$-unrectifiable for all $k > 1$ \cite{HM15}.
As such, it becomes necessary to consider other definitions of rectifiability in Carnot groups, and this has been extensively studied in recent years 
\cite{AM22,DLMV22,FSS01,Mag06,Pau04}.


In this paper, we examine the relationship between (Lipschitz) 1-rectifiability and $C^1_H$ 1-rectifiability in Carnot groups.
 Note that by \cite[Proposition~4.1]{Pan89}
 every Lipschitz curve in a Carnot group must be horizontal.
 We say that a curve in a Carnot group is $C^1_H$ if it is horizontal and is $C^1$ when viewed as a curve in the diffeomorphically equivalent Euclidean space.
Equivalently, a curve is $C^1_H$ if its Pansu derivative exists and is continuous \cite{Mag13,Zim25b}.
As in the Euclidean case, 1-rectifiability and $C^1_H$ 1-rectifiability are equivalent in any Carnot group in which a $C^1_H$ version of Whitney's Extension Theorem holds for curves \cite[Remark~5.3]{JS17}. Such extension results were first proven in \cite{Zim18} in the Heisenberg groups and then in \cite{JS17} for pliable Carnot groups. Further Whitney-type results for curves with higher regularity have been well studied in recent years \cite{CPS21b,CPS21a,PSZ19,PSZ24,SZ23,SZ25,Zim23}.

Our main result shows that the notions of 1-rectifiability and $C^1_H$ 1-rectifiability can be very different in general.

\begin{theorem}\label{t-supermain}
    There is a Carnot group $\mathbb{F}$ and a Lipschitz curve $\gamma\colon [0,1] \to \mathbb{F}$ such that $\gamma([0,1])$ is purely $C^1_H$ 1-unrectifiable.
\end{theorem}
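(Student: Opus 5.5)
Following the abstract, I take $\mathbb{F}$ to be the free Carnot group of step $3$ with $2$ generators, which is known to be non-pliable. The plan is to construct a Lipschitz curve $\gamma\colon[0,1]\to\mathbb{F}$ such that for every $C^1_H$ curve $\Gamma\colon[a,b]\to\mathbb{F}$ the set $\{t : \gamma(t)\in\Gamma([a,b])\}$ has Lebesgue measure zero. From this, pure $C^1_H$ $1$-unrectifiability of $\gamma([0,1])$ follows by a short reduction. Any $C^1_H$ image of an open subset of $\mathbb{R}$ is a countable union of $C^1_H$ images of compact intervals. Since $\gamma$ is Lipschitz, $\mathcal{H}^1(\gamma(A))\le \mathrm{Lip}(\gamma)\,\mathcal{L}^1(A)$. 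Hence $\mathcal{H}^1(\gamma([0,1])\cap \Gamma([a,b]))=\mathcal{H}^1(\gamma(\gamma^{-1}(\Gamma([a,b]))))=0$, and countable subadditivity finishes the reduction.

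The mechanism behind $\gamma$ comes from rigidity in $\mathbb{F}$. Write the horizontal derivative as $(\dot x_1,\dot x_2)$. The step-$3$ coordinates of a horizontal curve are iterated integrals of $(x_1,x_2)$. In the non-pliable direction there are horizontal curves, such as straight lines $t\mapsto \exp(tX_1)$, that are abnormal and rigid at first order. Along such a line, any $C^1$ horizontal perturbation that agrees with the line to first order changes a third-layer coordinate (for instance $\int x_2^2\,dx_1$-type terms) only by $o(h)\cdot h^2$ over an interval of length $h$. A Lipschitz curve, by contrast, can have horizontal derivative $X_1$ almost everywhere on a set while still producing third-layer increments of size comparable to $h^3$ through small oscillations. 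So I would build $\gamma$ by an iterative Cantor-type construction. At stage $n$, every subinterval of $[0,1]$ of length $h_n$ is modified by inserting tiny horizontal zigzags whose horizontal velocity is a.e.\ in the direction of the current tangent. The zigzags are chosen so that the third-layer increment over each subinterval is $\ge c\,h_n^3$ with a fixed sign, or so that the increments at different scales are mutually incompatible. The scales $h_n$ should be summable fast enough that $\gamma_n\to\gamma$ uniformly with a uniform Lipschitz bound.

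To prove that $\gamma$ meets each $C^1_H$ curve $\Gamma$ in a null set, I would argue by contradiction. Suppose $T=\gamma^{-1}(\Gamma)$ has positive measure. Take a Lebesgue density point $t_0$ of $T$ at which $\gamma$ is Pansu differentiable. Since $\Gamma$ is $C^1_H$, uniformly on a neighbourhood the increments $\Gamma(s)^{-1}\Gamma(s')$ equal $\delta_{s'-s}\exp(\Gamma'(s))$ up to $o(|s'-s|)$. On the set $T$, the corresponding points of $\gamma$ inherit this, so the $C^1_H$ Whitney-type condition holds along $\gamma|_T$ near $t_0$. Then I would use the non-pliable rigidity, namely the obstruction to Whitney extension shown in \cite{JS17}: near a density point, two points of $T$ at distance $\sim h$ must satisfy a third-layer relation with error $o(h^3)$. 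By construction, however, $\gamma$ violates this with defect $\ge c h^3$ for many such pairs at every scale.

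I expect the main obstacle to be making the construction robust. The defect has to appear for pairs of points lying in an arbitrary positive-measure set $T$, not merely at the endpoints of the dyadic subintervals. It also has to appear whatever the tangent direction $\Gamma'$ is, since $\Gamma$ need not be tangent to $X_1$. My first attempt would be to make the zigzag structure present at all locations and all directions: use countably many scales and, at each scale, cycle through a dense set of directions. Density of $T$ then guarantees that at a fixed proportion of subintervals both endpoints (approximately) lie in $T$. Finally, I would use Lipschitz control on $\gamma$ to transfer the defect from exact endpoints to nearby points of $T$.
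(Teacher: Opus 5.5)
Your reduction from ``$m(\gamma^{-1}(\Gamma([a,b])))=0$ for every $C^1_H$ curve $\Gamma$'' to pure $C^1_H$ $1$-unrectifiability is correct. However, the mechanism you propose for proving that claim cannot work. The relation you attribute to non-pliability (points of $T$ at distance $\sim h$ satisfy a third-layer relation with error $o(h^3)$) is just the first-order Pansu--Whitney condition, and \emph{every} Lipschitz curve satisfies it on large sets. By Pansu's differentiability theorem together with Egorov's and Lusin's theorems, for each $\varepsilon>0$ there is a compact $K\subset[0,1]$ with $m([0,1]\setminus K)<\varepsilon$ such that $d_c\bigl(\gamma(t),\gamma(s)*\exp((t-s)U_s)\bigr)=o(|t-s|)$ uniformly in $s\in K$, where $U_s\in V_1$ is the Pansu derivative. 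Run your own density-plus-Lipschitz-transfer argument on $T\cap K$ (which has positive measure once $\varepsilon<m(T)$), with $\gamma$ itself in place of $\Gamma$. It shows that no Lipschitz curve can have defects $\ge c\,h_n^3$ on a fixed proportion of the scale-$h_n$ subintervals for all $n$. So the curve your construction asks for does not exist. This is exactly why, in non-pliable groups, the Lusin property fails even though the Whitney condition always holds on large sets. Note also that zigzags whose velocity is a.e.\ parallel to the current tangent stay on the straight segment by \eqref{lifteq}, so they create no vertical excursion at all.

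The missing idea is an \emph{exact, one-sided} constraint rather than an asymptotic one. Integrating \eqref{lifteq} by parts, any horizontal curve with $\Gamma_1'\ge0$ on $[s,t]$ satisfies $\Gamma_5(t)-\Gamma_5(s)\le\tfrac{1}{2}\Gamma_1(t)\Gamma_2(t)^2-\tfrac{1}{2}\Gamma_1(s)\Gamma_2(s)^2$, and one with $\Gamma_2'\ge0$ has $\Gamma_4$ nondecreasing (Lemma~\ref{unreachable}). Continuity of $\Gamma'$ forces a $C^1_H$ curve to obey this on a whole neighbourhood of a density point $t_0$ of $\{\Gamma=\gamma\}$. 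There $\Gamma'(t_0)=\gamma'(t_0)=\lambda_0V(\gamma(t_0))$ with $V\in\{\pm X_1,\pm X_2\}$, so no cycling through directions is needed. Because the constraint is exact, a rise in $x_5$ (or drop in $x_4$) of \emph{any} height $\epsilon$ across the middle third of an interval already rules out coincidence on more than $\tfrac{4}{5}$ of it, provided later stages move the curve by much less than $\epsilon$ (Proposition~\ref{badintersect}). This is what lets the paper use excursions of height $(\lambda_n h/(24\cdot 5^n))^3=o(h^3)$ on intervals of length $h=N_n^{-1}$, keeping the speeds $\lambda_n$ bounded, and then take $N_{n+1}$ enormous.

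Separately, your contradiction argument ignores reparametrization. On $T=\gamma^{-1}(\Gamma([a,b]))$ you only know $\gamma(t)=\Gamma(\sigma(t))$ for an unknown $\sigma$, which is not even well defined where $\Gamma$ fails to be injective. So $\gamma|_T$ inherits no increment estimate in the variable $t$. The paper proves the same-time statement (Theorem~\ref{lusinzero}) and deals with this in Theorem~\ref{t-movetounrect}. With $E=\gamma([0,1])\cap\Gamma(\mathbb{R})$, the area formula gives $\Gamma'\neq0$ on a positive-measure part of $\Gamma^{-1}(E)$, so $\Gamma$ is bi-Lipschitz on an interval. Classical Lusin--Whitney applied to $\Gamma^{-1}\circ\gamma$ then yields a $C^1$ map $\varphi$ with $\Gamma\circ\varphi=\gamma$ on a set of positive measure.
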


Here pure $C^1_H$ 1-unrectifiability is defined using intersections with images of $C^{1}_{H}$ curves (Definition~\ref{d-pureunrec}).
The image of a Lipschitz curve is the simplest example of a 1-rectifiable set in a metric space, so Theorem \ref{t-supermain} illustrates that there is no hope of marrying the notions of 1-rectifiability and $C^1_H$ 1-rectifiability in general Carnot groups. The Carnot group $\mathbb{F}$ in Theorem \ref{t-supermain} is the ``Cartan group'' i.e. the free Carnot group of step 3 with 2 generators (Definition~\ref{d-freegroup}).
This is one of the five possible Carnot group structures on $\mathbb{R}^5$ (up to isomorphic equivalence)
\cite[Theorem~10.81]{ABB19}.
The word ``free'' describes the Lie algebra of $\mathbb{F}$ as it is generated freely by the Lie bracket operation on a set of two generators in a way that terminates upon the third iteration. There is a rich history behind the Cartan group $\mathbb{F}$ \cite{BH14}.
In particular, one may understand its algebraic structure via the physical scenario of one sphere rolling along the surface of another \cite{Zel06}, where
every configuration is obtainable by combining only two motions: rolling one sphere along lines of latitude and rolling it along lines of longitude.

To prove Theorem~\ref{t-supermain}, we construct a Lipschitz curve $\gamma\colon [0,1] \to \mathbb{F}$ such that, for any $C^1_H$ curve $\Gamma\colon [0,1] \to \mathbb{F}$, we have 
$
\mathcal{H}^1(\gamma([0,1]) \cap \Gamma([0,1]))=0
$.
Existence of such a Lipschitz curve follows from the following result, which is of independent interest.

\begin{theorem}\label{lusinzero}
There is a Lipschitz curve $\gamma\colon [0,1]\to \mathbb{F}$ such that, for every $C^{1}_H$ curve $\Gamma\colon [0,1]\to \mathbb{F}$, we have
\begin{equation}
    \label{e-supergoal}
m\{t\in [0,1]:\Gamma(t)=\gamma(t)\}=0.
\end{equation}
\end{theorem}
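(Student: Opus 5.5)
Work in exponential coordinates on $\mathbb{F}$, with horizontal coordinates $(x_1,x_2)$, the second-layer coordinate $x_3$ (signed area of the planar projection), and third-layer coordinates $x_4,x_5$ given by integrals of the form $\int x_1\,dx_3$ and $\int x_2\,dx_3$, up to lower order corrections. A horizontal curve is determined by its planar projection $(x_1,x_2)$ and its initial point. The strategy exploits non-pliability of $\mathbb{F}$: the horizontal direction $e_1$ (or $e_2$) is \emph{abnormal}, so a $C^1_H$ curve whose derivative stays close to $e_1$ on an interval cannot produce third-layer displacement of the order that a Lipschitz curve can. I will build $\gamma$ whose horizontal derivative is $e_1$ almost everywhere on a fat Cantor-like set. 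Off that set, $\gamma$ makes small ``loops'' that generate third-layer displacement comparable to $(\text{scale})^3$, where a curve moving in direction $e_1$ can only produce $o(\text{scale}^3)$.

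First I establish the local rigidity estimate. Let $\Gamma$ be $C^1_H$ and $\Gamma'(t_0)=e_1$ (after left translation and normalization). On $[t_0,t_0+h]$, write $\Gamma_2'=\omega(h)$ with $\omega\to0$. Then the $x_3$-increment is $O(h^2\omega(h))$. The third-layer coordinate $x_5\sim\int x_2\,dx_3$ is then $O(h^3\omega(h)^2)$, and the combination $x_4-\tfrac12 x_1x_3$-type terms is $O(h^3\omega(h))$. So for $\Gamma$, some third-layer quantity $Q(\Gamma(t_0)^{-1}\Gamma(t_0+h))$ is $o(h^3)$, uniformly while $\Gamma'$ stays near $e_1$. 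The quantity I plan to use is the coordinate of $x_5$ type, built from $x_2$ and area.

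Next I construct $\gamma$ as the limit of piecewise linear horizontal curves $\gamma_n$ with $|\gamma_n'|\le C$. Fix a sequence of scales and, at stage $n$, subdivide each interval into subintervals of length $h_n$. On a small proportion of each subinterval, replace the straight $e_1$ segment by a loop-like modification in $(x_1,x_2)$ of size $\epsilon_n h_n$. This modification leaves the endpoints in layers 1 and 2 unchanged but shifts $x_5$ by $\approx c\,\epsilon_n^3h_n^3$. Taking $\epsilon_n$ decaying slowly, for example $\epsilon_n\to0$ with $\sum\epsilon_n=\infty$ in the measure-bookkeeping sense, forces a positive $x_5$-defect at every scale around almost every point. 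Meanwhile the exceptional sets where $\gamma'\ne e_1$ have total measure $\sum\epsilon_n<1$. That summability and the divergence requirement conflict, so I instead use a fixed proportion at each scale but place the modifications so that they avoid the later set. Concretely, I would aim to show that for a.e. $t$ there are infinitely many $h\to0$ with $|Q(\gamma(t)^{-1}\gamma(t+h))|\ge c h^3$ while also $\gamma'(t)=e_1$. I would replace $e_1$ by a general direction if needed, using a rotation of the construction.

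Finally I conclude by density. Suppose $A=\{\Gamma=\gamma\}$ has positive measure. Take a density point $t\in A$ where $\gamma'(t)=\Gamma'(t)=e_1$ and the lower bound from the previous step holds. For small $h$ with $t+h\in A$ and $h$ one of the good scales, we get $ch^3\le|Q|=o(h^3)$, a contradiction. The issue is that $t+h$ must lie in $A$ at the good scales. I handle this by making the defect robust: $|Q(\gamma(t)^{-1}\gamma(s))|\ge ch^3$ for all $s$ in a fixed fraction of $[t+h,t+2h]$, which density of $A$ then intersects. The main obstacle is this construction step: producing large third-layer defects at all scales around almost every point while keeping $\gamma$ Lipschitz, with $\gamma'=e_1$ a.e. on a set of almost full measure and with controlled error from later stages. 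I would attempt it through a self-similar construction using dilations $\delta_{h}$, so that each stage is a rescaled copy of a single model loop, and I would bound the tail of later perturbations by $O(h^3\sum_{m>n}(h_m/h_n)^{\dots})$.
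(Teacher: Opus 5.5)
There is a genuine gap, and it is in the core mechanism. Your contradiction compares, at the density point $t$ itself, an upper bound $Q(\Gamma(t)^{-1}\Gamma(t+h))=o(h^3)$ for the $C^1_H$ curve with a lower bound $|Q(\gamma(t)^{-1}\gamma(s))|\ge ch^3$ for $s\in[t+h,t+2h]$. You need that lower bound for a.e.\ $t$ with $\gamma'(t)$ parallel to $e_1$, and no Lipschitz curve can have it.

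Your own computation never really uses continuity of $\Gamma'$. Let $\tilde\gamma=\gamma(t)^{-1}*\gamma$; it is horizontal, has the same $\gamma_1',\gamma_2'$, and $\tilde\gamma(t)=0$. Then \eqref{lifteq} gives, for $s\in[t,t+2h]$,
\[
|\tilde\gamma_5(s)|\le \int_t^{t+2h}|\gamma_2'|\,|\tilde\gamma_1|\,|\tilde\gamma_2|\le C h^3\,\omega(2h)^2,\qquad \omega(r):=\frac1r\int_t^{t+r}|\gamma_2'|.
\]
Here $\omega(r)\to0$ at every Lebesgue point of $\gamma_2'$ with $\gamma_2'(t)=0$, i.e.\ at almost every point where $\gamma'$ is parallel to $e_1$. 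Conceptually this is Pansu differentiability: any continuous degree-$3$ homogeneous quantity vanishing on $\exp(\mathbb{R}X_1)$ is $o(h^3)$ along $\gamma$ at such points.

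So a pointwise $o(h^3)$ bound does not separate $C^1_H$ curves from Lipschitz ones. The goal ``defect $\ge ch^3$ at infinitely many scales around a.e.\ point'' is unattainable. The conflict you noticed between summable exceptional sets and non-summable defects is a symptom of this, not something a cleverer placement of modifications (or a self-similar model loop) can fix.

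The missing ingredient is a rigidity property of $C^1_H$ curves that holds between \emph{arbitrary} pairs of points of an interval, not at one base point; this is where continuity of $\Gamma'$ genuinely enters. Lemma~\ref{unreachable} integrates $\Gamma_5'=\Gamma_2'\Gamma_1\Gamma_2$ by parts: if $\Gamma_1'\ge0$ on $[s,t]$ then $\Gamma_5(t)-\Gamma_5(s)\le\tfrac12[\Gamma_1\Gamma_2^2]_s^t$, and $\Gamma_4$ is monotone when $\Gamma_2'$ has a sign. Near a density point where $\Gamma'=\lambda_0V$, continuity gives such a sign condition on a whole neighborhood.

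The construction then places, in each stage-$n$ interval $[a,b]$, two blocks, each of proportion at least $1/5$. After left translation by $\gamma_n(a)^{-1}$, the second coordinate is within $\epsilon/10$ of $0$ on both blocks, while the fifth coordinate jumps by $\epsilon=(\lambda(b-a)/(24\cdot5^n))^3$ between them. If $\Gamma$ met $\gamma$ in both blocks, the fifth-coordinate increment would exceed $4\epsilon/5$, while Lemma~\ref{unreachable} bounds it by $O(\epsilon^2)$. Hence the coincidence proportion on each such interval is at most $4/5$ (Proposition~\ref{badintersect}), contradicting the proportion $>11/12$ found near a density point.

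Because the comparison is $\epsilon$ against $\epsilon^2$ and against the tail $d_c(\gamma,\gamma_{n+1})$ (made small by taking $N_{n+1}$ huge), the jump may be tiny relative to $(b-a)^3$. So the modified portions have summable measure, the speeds $\lambda_n$ stay bounded, and Borel--Cantelli gives $\gamma'\in\{\pm\lambda_0X_1,\pm\lambda_0X_2\}$ a.e.

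Note also that the paper modifies every segment, in each of the four directions, at every stage. Your plan only gestures at the loop regions (``rotation of the construction''), although they form a set of positive measure where $\gamma'\neq e_1$.
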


We now briefly describe the motivation behind the development of this theorem.
A Carnot group $\bbG$ has the $C^1_H$-Lusin property if, for any Lipschitz curve $\gamma\colon [0,1] \to \bbG$ and any $\varepsilon>0$, there is a $C^1_H$ curve $\Gamma\colon [0,1] \to \bbG$ so that $m\{t\in [0,1]:\Gamma(t)=\gamma(t)\} > 1-\varepsilon$. 
This property holds in Euclidean space
and in every pliable Carnot group (for example, the Heisenberg group) \cite{JS17}. 
The first author showed in \cite{Spe16}  that the $C^1_H$-Lusin property does not hold in the Engel group $\mathbb{E}$. More precisely, given any $\varepsilon > 0$ there is a Lipschitz curve $\gamma\colon [0,1] \to \mathbb{E}$ such that $m\{t\in [0,1]:\Gamma(t)=\gamma(t)\} \leq \varepsilon$ for every $C^1_H$ curve $\Gamma\colon [0,1] \to \mathbb{E}$. Answering a question posed by Prof. Jonathan Bennett, the authors of the present paper showed that, in the Engel group, one cannot obtain a similar statement with $\varepsilon=0$. Instead, every Lipschitz curve in the Engel group meets some $C^{1}_H$
curve in a set of strictly positive measure \cite{SZ25}. 
Here a crucial fact is that $X_2$ is the only direction in the horizontal distribution of $\mathbb{E}$ that is not pliable, 
so a curve whose velocity is bounded away from $X_{2}$ does in fact admit a $C^{1}_{H}$-Lusin approximation. Theorem~\ref{lusinzero} shows that the setting of the Cartan group $\mathbb{F}$ is quite different from that of the Engel group.

To prove Theorem \ref{lusinzero}, we exploit the fact that, in $\mathbb{F}$, both generating directions $X_{1}$ and $X_{2}$ are non-pliable. In coordinates, this gives constraints on the points that are reachable by $C^{1}_H$ 
curves moving in the directions $\pm X_{1}$ or $\pm X_{2}$ (Lemma \ref{unreachable}). For example, any horizontal curve moving in the positive $X_2$ direction \textit{must} be increasing in its 4th coordinate. We use this to construct Lipschitz horizontal curves that become increasingly difficult for $C^{1}_H$ 
curves to follow (Construction \ref{c-construction} and Proposition \ref{badintersect}). Since suitable pairs of points can be joined by a concatenation of eight horizontal segments in directions $\pm X_{1}$ and $\pm X_{2}$ (Proposition~\ref{p-staircase}), this construction can be iterated (Construction \ref{c-inductivedefinition}). We then obtain a limit curve (Proposition \ref{limitislipschitz}) and deduce Theorem \ref{lusinzero}.

We now describe the organization of the paper. In Section \ref{s-carnot} we define the free Carnot group $\mathbb{F}$, describe its structure as a metric space, and define pure $C^1_H$ 1-unrectifiability.
In Section~\ref{reachability}, we show how to move vertically in $\mathbb{F}$ using ``staircase'' curves that are piecewise segments in the horizontal basis directions $\pm X_{1}, \pm X_{2}$. We also establish the relationship between the directions of the horizontal components of a horizontal curve and its changes in height. Section~\ref{nolusin} contains the construction for and proof of Theorem~\ref{lusinzero}.
Finally, in Section~\ref{s-equivalence} we use Theorem~\ref{lusinzero} to prove Theorem~\ref{t-supermain}.

\medskip

\textbf{Acknowledgements:} G. Speight was supported by the National Science Foundation under Award No. 2348715.

\section{Preliminaries}
\label{s-carnot}

In this section we describe our main objects of study. For simplicity we restrict our attention to the free Carnot group, though many of our definitions also make sense in more general Carnot groups. The relevant definitions in the more general setting can be found in \cite{SZ25} or \cite{BLU07}.

\subsection{Lie groups}
A {\em Lie group} is a (smooth, finite dimensional) manifold together with a group operation that is smooth and has smooth inverse. 

Given a Lie group operation on $\mathbb{R}^n$,
a vector field $X$ 
on $\mathbb{R}^n$ is {\em left invariant} if
$
dL_x(y) X(y) = X(L_x(y))
$
where $L_x\colon \mathbb{R}^n \to \mathbb{R}^n$ is the operation of left multiplication by $x$ and $dL_x$ is its differential i.e. its Jacobian matrix.

Given two vector fields $X$ and $Y$ on $\mathbb{R}^n$, the {\em Lie bracket of $X$ and $Y$} is defined to be the vector field $[X,Y] = XY-YX$.

\subsection{The free Carnot group of step 3 with 2 generators} 
There is a Lie group operation $*$ on $\mathbb{R}^5$ for which the following is a basis of left invariant vector fields \cite{BL21}:
\begin{equation}
    \label{e-F23coords}
    \begin{aligned}
        &X_{1}=\partial_{1}, \qquad X_{2}=\partial_{2}-x_{1}\partial_{3}+\tfrac12 x_{1}^{2} \partial_{4}+x_{1}x_{2}\partial_{5},\\
        &X_3 = \partial_3 -x_1 \partial_4 - x_2 \partial_5,
        \qquad
        X_4 = \partial_4,
        \qquad
        X_5 = \partial_5.
    \end{aligned}
\end{equation}
The only non-zero Lie bracket relationships in \eqref{e-F23coords} are
\begin{equation}
    \label{e-stratification}
    [X_{2},X_{1}]=X_{3}, \qquad [X_{3}, X_{1}]=X_{4}, \quad \text{ and } \quad [X_{3}, X_{2}]=X_{5}.
\end{equation}
Therefore, the vector space $\mathfrak{f} = \text{span}\{X_1,\dots,X_5\}$ is closed under Lie bracketing and is hence an example of a {\em Lie algebra}. Since the Lie brackets of the vectors in \eqref{e-F23coords} vanish after 3 iterations by \eqref{e-stratification}, we also say that $\mathfrak{f}$ is {\em nilpotent} and of step 3.

While we will not need the full formulation of the group operation $*$, we note the following from \cite{BL21}: 
given $x,y,z\in\mathbb{R}^5$ with $z = x*y$,
\begin{equation}
\label{e-groupoperation}
z_1 = x_1 + y_1 \quad \text{and} \quad
z_2 = x_2 + y_2.
\end{equation}
The other coordinates $z_i$ of $z$ are polynomials of $x_j$ and $y_k$ with $k,j \leq i$.

\begin{definition}
\label{d-freegroup}
We say that $(\mathbb{R}^5,*)$ is the \emph{free Carnot group of step 3 with 2 generators} and we denote it by $\mathbb{F}$. 
 The Lie algebra $\mathfrak{f}$ is the {\em free-nilpotent Lie algebra of step 3 with 2 generators}. 
\end{definition}

Note that we can write $\mathfrak{f} = V_1 \oplus V_2 \oplus V_3$ where
    \begin{equation}
        \label{e-stratification2}
        V_{1}=\mathrm{Span}\{X_{1}, X_{2}\}, \quad V_{2}=\mathrm{Span}\{X_{3}\}, \quad V_{3}=\mathrm{Span}\{X_{4}, X_{5}\},
    \end{equation}
with $[V_1,V_1] = V_2$ and $[V_2,V_1] = V_3$.
Such a decomposition is called a {\em stratification} of the Lie algebra,
so $\mathbb{F}$ is indeed an example of a step 3 Carnot group. 
For an exhaustive overview of Carnot groups, see Parts I and III of \cite{BLU07}.

The {\em exponential map} $\exp \colon \mathfrak{f} \to \mathbb{R}^5$ is defined as $\exp(X) = \gamma_X(1)$, where $\gamma_X\colon \mathbb{R} \to \mathbb{R}^5$ is the solution to the differential equation $\gamma'(t) = X(\gamma(t))$ with $\gamma(0)=0$. Since $\mathfrak{f}$ is nilpotent, the exponential map is a diffeomorphism.
From \eqref{e-F23coords},
we can then calculate
    \begin{equation}
        \label{e-expformula}
        \exp(tX_1) = (t,0,0,0,0) \quad \text{ and } \quad \exp(tX_2) = (0,t,0,0,0)
        \qquad
        \text{for all } t \in \mathbb{R}
    \end{equation}
    since $\tfrac{d}{dt} (t,0,0,0,0) = X_1(t,0,0,0,0)$
    and
    $\tfrac{d}{dt} (0,t,0,0,0) = X_2(0,t,0,0,0)$.

We could equivalently define $\mathbb{F}$ to be the Carnot group with Lie bracket relations 
satisfying 
\eqref{e-stratification2}. To do this, first define $\mathfrak{g}$ to be any 5 dimensional Lie algebra 
with a basis $\{ Y_1,\dots,Y_5\}$ satisfying the same bracket relationships as \eqref{e-stratification}.
Associated with this nilpotent Lie algebra is a connected, simply connected Lie group $(\mathbb{G},\cdot)$ such that $\mathfrak{g}$ is its space of left invariant vector fields.
Identifying the basis vectors $\{Y_1,\dots,Y_5\} \subset \mathfrak{g}$ with $\{X_1,\dots,X_5\} \subset \mathfrak{f}$ defines a Lie algebra isomorphism between $\mathfrak{g}$ and $\mathfrak{f}$, and this induces a Lie group isomorphism between $\mathbb{G}$ and $\mathbb{F}$.

We point out that $\mathbb{F}$ is the representation of the free Carnot group of step 3 with 2 generators in {\em exponential coordinates of the second kind} \cite{BL21}.
This is because each point $(x_1,\dots,x_5) \in \mathbb{F}$ is equal to
\begin{equation}
\label{e-secondexp}    
\exp(x_{5}X_{5}) * \exp(x_{4}X_{4}) * \exp(x_{3}X_{3}) * \exp(x_{2}X_{2}) * \exp(x_{1}X_{1}).
\end{equation}


\subsection{Horizontal curves in the free Carnot group}

For any absolutely continuous curve $\gamma \colon [a,b]\to \mathbb{R}^5$, 
the velocity $\gamma'(t)$ exists for almost every $t\in [a,b]$.
In keeping with the terminology of sub-Riemannian geometry, we have the following:
\begin{definition}
\label{d-horizontal}
An absolutely continuous curve $\gamma\colon [a,b]\to \mathbb{R}^5$ is 
\emph{horizontal} 
in $\mathbb{F}$ if there exist $u_1,u_2 \in L^1([a,b])$ such that
\begin{equation}\label{dotgamma}
\gamma'(t) = u_1(t)X_{1}(\gamma(t)) + u_2(t)X_{2}(\gamma(t))
\end{equation}
for almost every $t \in [a,b]$. 
 We define the {\em horizontal speed} of $\gamma$ as $|\gamma'|_H := \sqrt{u_1^2+u_2^2}$ whenever \eqref{dotgamma} holds, and 
 note $|\gamma'|_H \in L^1([a,b])$. Define the
 \emph{length} of $\gamma$ as $\ell(\gamma):=\int_{a}^{b} |\gamma'|_H$.
\end{definition}

In other words, a curve is horizontal if its velocity vectors almost always lie within the first (or {\em horizontal}) layer $V_1$ of the stratification \eqref{e-stratification2}.
By using both \eqref{e-F23coords} and \eqref{dotgamma}, we obtain the following alternate characterization.

\begin{lemma}\label{lifting}
Let $\gamma=(\gamma_1,\gamma_2,\gamma_3,\gamma_4,\gamma_5)\colon [a,b]\to \mathbb{R}^5$ be an absolutely continuous curve. Then $\gamma$ is horizontal in $\mathbb{F}$ if and only if the following equalities all hold almost everywhere on $[a,b]$:
\begin{equation}\label{lifteq}
\gamma_{3}'=-\gamma_{2}'\gamma_{1}, \qquad \gamma_{4}'= \tfrac12 \gamma_{2}'\gamma_{1}^{2}, \qquad \gamma_{5}'=\gamma_{2}'\gamma_{1}\gamma_{2}.
\end{equation}
Moreover, $\ell(\gamma) =\int_{a}^{b} |\gamma'|_H =\int_a^b \sqrt{(\gamma_1')^2 + (\gamma_2')^2}$.
\end{lemma}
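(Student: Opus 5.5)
The plan is to expand the defining identity \eqref{dotgamma} in coordinates using \eqref{e-F23coords}, and then read off both directions of the equivalence. Evaluating the vector fields along $\gamma$, we have $X_1(\gamma(t))=(1,0,0,0,0)$ and
\[
X_2(\gamma(t))=\bigl(0,1,-\gamma_1(t),\tfrac12\gamma_1(t)^2,\gamma_1(t)\gamma_2(t)\bigr).
\]
Hence, at any time $t$ where \eqref{dotgamma} holds, the five component equations are
\[
\gamma_1'=u_1,\qquad \gamma_2'=u_2,\qquad \gamma_3'=-u_2\gamma_1,\qquad \gamma_4'=\tfrac12 u_2\gamma_1^2,\qquad \gamma_5'=u_2\gamma_1\gamma_2 .
\]

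For the forward direction, suppose $\gamma$ is horizontal with controls $u_1,u_2\in L^1$. The first two component equations force $u_1=\gamma_1'$ and $u_2=\gamma_2'$ almost everywhere. Substituting these into the last three equations gives exactly \eqref{lifteq} almost everywhere.

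For the converse, suppose \eqref{lifteq} holds a.e. Define $u_1:=\gamma_1'$ and $u_2:=\gamma_2'$. These lie in $L^1([a,b])$ because $\gamma$ is absolutely continuous, so its coordinates have integrable derivatives. Wherever $\gamma'$ exists and \eqref{lifteq} holds, which is almost everywhere, the componentwise computation above shows that $\gamma'=u_1X_1(\gamma)+u_2X_2(\gamma)$. So $\gamma$ is horizontal.

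For the length formula, the identification $u_i=\gamma_i'$ a.e. gives $|\gamma'|_H=\sqrt{(\gamma_1')^2+(\gamma_2')^2}$ a.e., and integrating over $[a,b]$ yields the stated expression for $\ell(\gamma)$. The one point needing care is that $|\gamma'|_H$ is well defined. This holds because $X_1$ and $X_2$ are pointwise linearly independent, since their first two components are $(1,0)$ and $(0,1)$. Consequently the controls in \eqref{dotgamma} are uniquely determined a.e., namely $u_i=\gamma_i'$.

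There is no real obstacle here. The only points needing attention are the bookkeeping of null sets and the uniqueness of the controls just noted.
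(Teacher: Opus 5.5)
Your proposal is correct and follows the paper's proof: you expand \eqref{dotgamma} in coordinates using \eqref{e-F23coords}, read off $u_1=\gamma_1'$ and $u_2=\gamma_2'$ from the first two components, and substitute these to get \eqref{lifteq}. For the converse you take these as the controls, and the length formula follows directly. Your extra remarks, that $u_i\in L^1$ by absolute continuity and that the controls are uniquely determined, simply make explicit points the paper leaves implicit.
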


\begin{proof}
Suppose \eqref{dotgamma} holds. Using \eqref{e-F23coords} gives for almost every $t\in [a,b]$
\[\gamma'(t) = u_1 (t)\cdot (1,0,0,0,0)+u_2 (t) \cdot (0,1,-\gamma_{1}(t),\tfrac12 \gamma_{1}^{2}(t),\gamma_{1}(t)\gamma_{2}(t)).\]
Examining the first two components implies $u_1(t)=\gamma_{1}'(t)$ and $u_2(t)=\gamma_{2}'(t)$, which gives the claimed formula for $\ell(\gamma)$.
Also, substituting these terms back into the equation above gives \eqref{lifteq}. Conversely, if \eqref{lifteq} holds then \eqref{dotgamma} holds with controls $u_1(t)=\gamma_{1}'(t)$ and $u_2(t)=\gamma_{2}'(t)$ for almost every $t\in [a,b]$.
\end{proof}

\subsection{The free Carnot group as a metric space}

\begin{definition}
The \emph{Carnot-Carath\'{e}odory (CC) metric} between any two points $x, y\in \mathbb{F}$ is defined by
\[d_{c}(x,y):=\inf \{\ell(\gamma)\colon \gamma \mbox{ is a horizontal curve joining } x \mbox{ and }y\}.\]
\end{definition}
By the Chow-Rashevsky Theorem \cite[Theorem~19.1.3]{BLU07}, any two points in $\mathbb{F}$ can be connected by a horizontal curve of finite length. Hence $d$ is indeed a metric.
Since the group operation $*$ acts linearly in the first two coordinates (see \eqref{e-groupoperation}), it follows that $d_{c}$ is
left-invariant, 
i.e. $d_{c}(r*p,r*q) = d_{c}(p,q)$ 
for any $p,q,r \in \mathbb{F}$. 

\begin{remark}
    \label{rem-distance}
    For $V = \pm X_1$ or $V = \pm X_2$, we have $d_c(\exp(tV),\exp(sV)) = |t-s|$ for all $s,t \in \mathbb{R}$. For instance, suppose $V = X_1$. Then $\exp(zV) = (z,0,0,0,0)$ for any $z \in \mathbb{R}$ by \eqref{e-expformula}. Hence the claim follows from the fact that the $\tau \mapsto ((1-\tau)t+\tau s,0,0,0,0)$ for $\tau \in [0,1]$ 
    is a horizontal curve from $\exp(tV)$ to $\exp(sV)$, that the length of any horizontal curve is equal to the length of its projection to the first two coordinates (Lemma~\ref{lifting}), and that the shortest curve between two points in the Euclidean plane is a segment.
\end{remark}

The CC-metric on $\mathbb{F}$ is not bi-Lipschitz equivalent to the Euclidean metric $|\cdot|$ 
on $\mathbb{R}^5$. However, 
for any compact set $K\subset \mathbb{F}$ there are constants $c_1,c_2>0$ such that
\begin{equation}\label{disugdist}
c_1|x-y|\leq d_{c}(x,y)\leq c_2|x-y|^{\frac{1}{3}}\qquad \mbox{for all } x,y\in K.
\end{equation}
See, for example, \cite[Proposition~5.15.1]{BLU07}, \cite[Proposition~1.5]{FS82}, or \cite[Theorem~1.5.1]{Mon01}.

 \begin{definition}
 Given an interval $I \subset \bbR$ and a constant $L>0$, a map $\gamma\colon I \to \mathbb{F}$ is an {\em $L$-Lipschitz curve} if it is $L$-Lipschitz continuous with respect to the metric $d_c$ i.e. 
 \[
 d_c(\gamma(a),\gamma(b)) \leq L |a-b|
 \quad
 \text{ for all } a,b \in I.
 \]
 \end{definition}

\begin{remark}
\label{rem-horiz}    
It follows from 
\eqref{disugdist}
that any Lipschitz curve  $\gamma:I \to \mathbb{F}$ is Lipschitz continuous with respect to the Euclidean metric on  any compact interval in $I$, and hence $\gamma'$ exists almost everywhere  in $I$.
Moreover, by \cite[Proposition~4.1]{Pan89} or an argument similar to Proposition~1.1 in \cite{HZ15}, every such curve is necessarily horizontal.
\end{remark}

\begin{definition}
    Given an interval $I \subset \bbR$, a map $\gamma\colon I \to \mathbb{F}$ is a {\em $C^1_H$ curve} if it is horizontal and is $C^1$ as a curve in $\bbR^5$ (i.e. $\gamma'$ exists at every point and is continuous).
\end{definition}
This definition of $C^1_H$ is equivalent to the condition that the Pansu derivative of $\gamma$ exists and is continuous.
See, for example, \cite{Mag13,Zim25b}.
According to the following, these curves are Lipschitz on compact subintervals.

\begin{lemma}
    \label{l-C1HisLip}
 Suppose $I \subset \bbR$ is an interval and $\Gamma\colon I \to \mathbb{F}$ is $C^1_H$.
Let $J\subset I$ be any subinterval on which $\Gamma'$ is bounded. Then $\Gamma|_{J}$ is Lipschitz.
In particular, $\Gamma$ is Lipschitz on any compact subinterval of $I$.

\end{lemma}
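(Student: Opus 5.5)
The idea is to bound $d_c(\Gamma(s),\Gamma(t))$ by the length of the restricted curve $\Gamma|_{[s,t]}$, which is itself a horizontal curve joining the two points. Fix $s<t$ in $J$ and let $M:=\sup_{J}|\Gamma'|$, where $|\cdot|$ is the Euclidean norm on $\mathbb{R}^5$; this is finite by hypothesis. Since $\Gamma$ is $C^1$, it is absolutely continuous on $[s,t]$. Since it is horizontal, Lemma~\ref{lifting} applies to $\Gamma|_{[s,t]}$ and gives
\[
\ell(\Gamma|_{[s,t]})=\int_s^t \sqrt{(\Gamma_1')^2+(\Gamma_2')^2}.
\]
The integrand is at most $|\Gamma'|\le M$ pointwise, because the horizontal speed only involves the first two coordinates of $\Gamma'$.

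By the definition of the CC metric as an infimum over horizontal curves, we get
\[
d_c(\Gamma(s),\Gamma(t))\le \ell(\Gamma|_{[s,t]})\le M|t-s|.
\]
Hence $\Gamma|_J$ is $M$-Lipschitz with respect to $d_c$. There is a minor point here: $\Gamma|_{[s,t]}$ must count as a horizontal curve in the sense of Definition~\ref{d-horizontal}. This holds because we may take the controls $u_i=\Gamma_i'$, which are continuous and hence lie in $L^1([s,t])$.

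For the second assertion, let $J\subset I$ be a compact subinterval. Then $\Gamma'$ is continuous on the compact set $J$, so it is bounded there, and the first part applies.

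There is no real obstacle. The only point needing care is that we never need the comparison \eqref{disugdist}: the bound comes directly from the length of the curve itself, not from Euclidean distances between its endpoints.
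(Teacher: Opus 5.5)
Your proposal is correct and follows essentially the same argument as the paper: bound $d_c(\Gamma(s),\Gamma(t))$ by the length of the horizontal curve $\Gamma|_{[s,t]}$ and then by $\sup_J$ of the speed times $(t-s)$. The only cosmetic difference is that you take the supremum of the Euclidean norm $|\Gamma'|$ and observe that it dominates $|\Gamma'|_H$, whereas the paper takes the supremum of $|\Gamma'|_H$ directly.
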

\begin{proof}
Set $L:= \sup_{t\in J} |\Gamma'(t)|_H$. 
Then for all $s \leq t$ in $J$, since  $\Gamma|_{[s,t]}$  is a horizontal curve from $\Gamma(s)$ to $\Gamma(t)$, we have
\[
d_c(\Gamma(s),\Gamma(t)) \leq \ell(\Gamma|_{[s,t]})
= \int_s^t |\Gamma'|_H 
\leq L (t-s).
\]
\end{proof}

\subsection{Rectifiability in the free Carnot group}

We now explicitly define pure unrectifiability by $C^1_H$ curves as used in Theorem \ref{t-supermain}.

\begin{definition}
\label{d-pureunrec}
A set $E \subset \mathbb{F}$ is {\em purely $C^1_H$ 1-unrectifiable} if 
$$
\mathcal{H}^1(E \cap \Gamma([0,1])) = 0
$$
for every $C^1_H$ curve $\Gamma\colon [0,1] \to \mathbb{F}$.
\end{definition}

Note that this definition does not depend on the choice of interval in the domain of $\Gamma$, since horizontal curves remain horizontal upon rescalings and translations of their domains. We 
emphasize that the Hausdorff measure here is defined with respect to the CC-metric $d_c$. However, 
if a set has $1$-dimensional Hausdorff measure zero with respect to the CC metric, then it also has $1$-dimensional Hausdorff measure zero with respect to the Euclidean metric. This follows, for instance, from \eqref{disugdist}.

\section{Horizontal Curves in the Cartan group}\label{reachability}

In this section we establish several properties of horizontal curves in $\mathbb{F}$ that will be useful for the construction in Section \ref{nolusin}.

\subsection{Staircase curves}
We begin by explaining how one can travel to points in the third layer of the group using well-behaved curves.
These curves will be piecewise horizontal segments in directions $\pm X_{1}$ or $\pm X_{2}$. 
\begin{definition}
    \label{d-segment}
    Given a vector $V \in \{\pm X_1, \pm X_2\}$ and $\lambda \geq 0$, we say that a curve $\eta\colon [a,b] \to \mathbb{F}$ is a {\em horizontal $\lambda V$-segment} if it has the form
    $$
    \eta(t) 
    = 
    \eta(a) * \exp(\lambda (t-a) V).
    $$
\end{definition}
If the choice of $V$ is not important or not explicitly known, we refer to such a curve as a {\em horizontal $\lambda$-segment}. We emphasize that, by definition, these curves only travel in the directions of the basis vectors $\pm X_{1}, \pm X_{2}$.
For such a curve $\eta$, 
\eqref{e-groupoperation}
and
\eqref{e-expformula}
 give
\begin{equation}
\label{e-length}
\ell(\eta) = \int_a^b \sqrt{(\eta_1')^2+(\eta_2')^2}
=
\int_a^b \lambda
=
\lambda (b-a).
\end{equation}
It also follows from the definition of the exponential map and left invariance of $V$ that $\eta$ is horizontal with 
\begin{equation}\label{e-nicederivativeform}
\eta'(t)=\lambda V(\eta(t)) \mbox{ for all }t\in [a,b].
\end{equation}


\begin{proposition}
\label{p-staircase}
Fix $\lambda\in \mathbb{R}$, and suppose $p \in \mathbb{F}$ is either the point $(0,0,0,\lambda^{3},0)$ or the point $(0,0,0,0,\lambda^{3})$. 
Then there is a curve $\zeta_\lambda\colon [0,8] \to \mathbb{F}$ from $0$ to $p$ such that, for each $k=0,1,\dots,7$,
the restriction $\zeta_\lambda|_{[k,k+1]}$ is a horizontal $|\lambda|$-segment.
In particular, $\ell(\zeta_\lambda) = 8|\lambda|$.
\end{proposition}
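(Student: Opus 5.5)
The plan is to build $\zeta_\lambda$ explicitly as the horizontal lift of a closed, axis-parallel lattice polygon in the $(x_1,x_2)$-plane, and to read off the endpoint from Lemma~\ref{lifting}. By \eqref{lifteq}, a horizontal curve $\gamma$ starting at $0$ whose planar projection $(\gamma_1,\gamma_2)$ is a closed loop $c$ ends at the point
\[
\Bigl(0,\,0,\,-\int_c x_1\,dx_2,\ \tfrac12\int_c x_1^2\,dx_2,\ \int_c x_1x_2\,dx_2\Bigr).
\]
Moreover, the lift is uniquely determined by the projection and the starting point, since \eqref{lifteq} determines $\gamma_3,\gamma_4,\gamma_5$ by integration. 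Each piece along which the projection moves at constant speed $|\lambda|$ parallel to an axis is then a horizontal $|\lambda|$-segment in the sense of Definition~\ref{d-segment}. Indeed, $\eta(t)=\eta(a)*\exp(|\lambda|(t-a)V)$ is horizontal by \eqref{e-nicederivativeform}, and by \eqref{e-groupoperation}, \eqref{e-expformula} it has exactly this projection, so uniqueness identifies it with the lift.

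\emph{Case of $(0,0,0,\lambda^3,0)$.} I would take the ``figure-eight'' made of the counterclockwise unit square $[0,1]\times[0,1]$ and the clockwise unit square $[-1,0]\times[0,1]$. Concretely, the vertices are
\[
(0,0)\to(1,0)\to(1,1)\to(0,1)\to(0,0)\to(-1,0)\to(-1,1)\to(0,1)\to(0,0),
\]
traversed in directions $+X_1,+X_2,-X_1,-X_2,-X_1,+X_2,+X_1,-X_2$. Only the vertical edges contribute to integrals of the form $\int f\,dx_2$. The first square contributes $(1,1,\tfrac12)$ to $\bigl(\int x_1dx_2,\int x_1^2dx_2,\int x_1x_2dx_2\bigr)$, and the second contributes $(-1,1,-\tfrac12)$. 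The totals are $(0,2,0)$, so the lift ends at $(0,0,0,1,0)$.

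\emph{Case of $(0,0,0,0,\lambda^3)$.} Here I would stack the squares vertically instead: take $[0,1]\times[0,1]$ counterclockwise and $[0,1]\times[-1,0]$ clockwise. The directions are $+X_1,+X_2,-X_1,-X_2,+X_1,-X_2,-X_1,+X_2$. The contributions are $(1,1,\tfrac12)$ and $(-1,-1,\tfrac12)$, with totals $(0,0,1)$, so the lift ends at $(0,0,0,0,1)$.

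For general $\lambda$, I would dilate the loop by $\lambda$, replacing $c$ by $\lambda c$, and parametrize each edge on $[k,k+1]$ at speed $|\lambda|$. The integrals above are homogeneous of degrees $2,3,3$ in the loop, so the endpoint becomes $(0,0,0,\lambda^3,0)$ or $(0,0,0,0,\lambda^3)$ respectively. Negative $\lambda$ merely reverses each edge direction, which is why each piece is a horizontal $|\lambda|V$-segment with $V\in\{\pm X_1,\pm X_2\}$. For $\lambda=0$ the curve is constant. The length $8|\lambda|$ follows from \eqref{e-length}.

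The only real work is finding loops that kill $\int x_1dx_2$ while isolating exactly one of the two third-layer integrals using just eight edges. The two mirrored-square configurations above do this, so I expect the remaining verification to be routine bookkeeping of the vertical-edge integrals.
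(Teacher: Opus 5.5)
Your proof is correct. Your curves are essentially the paper's, but your verification takes a different route.

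The paper writes the target point as $F(\lambda,-\lambda)*F(-\lambda,-\lambda)$ (respectively $F(-\lambda,\lambda)*F(-\lambda,-\lambda)$), where $F(\lambda,\mu)=\exp(\mu X_2)*\exp(\lambda X_1)*\exp(-\mu X_2)*\exp(-\lambda X_1)$. Each factor traces an axis-parallel square of side $|\lambda|$, so its $\zeta_\lambda$ is also a pair of oppositely oriented squares: side by side for the $x_4$ target, stacked for the $x_5$ target.

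What differs is how the endpoint is certified. The paper stays in the Lie algebra. The step-3 BCH formula gives $F(\lambda,\mu)=\exp(\lambda\mu X_3-\tfrac12\lambda^2\mu X_4-\tfrac12\lambda\mu^2 X_5)$. Multiplying two commutators with opposite $X_3$-components then cancels the $X_3$ term and one third-layer term while doubling the other. Because the curve is defined as a product of exponentials, the segment property is automatic and the argument is coordinate-free.

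You instead integrate \eqref{lifteq} along the projection. This uses only Lemma~\ref{lifting}, which is proved in the paper, rather than the cited BCH formula. It also reduces the $\lambda^3$ scaling to a one-line homogeneity remark. And it gives a clean geometric picture: by Green's theorem your three integrals are the signed area and the signed first moments $\iint x_1\,dA$, $\iint x_2\,dA$ of the enclosed region. So one needs a loop with zero signed area and exactly one nonzero moment.

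The price is that your computation is tied to these particular coordinates. It also needs the small uniqueness-of-lifts step to identify each edge with a segment in the sense of Definition~\ref{d-segment}. You could skip that step by defining $\zeta_\lambda(t)=\zeta_\lambda(k)*\exp(|\lambda|(t-k)V_k)$ directly and applying your endpoint formula to it.
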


In other words, any point of the form $(0,0,0,\lambda^{3},0)$ or $(0,0,0,0,\lambda^{3})$ can be reached from the origin by traversing a ``staircase'' consisting of 8 horizontal segments in the horizontal directions $\pm X_1$ and $\pm X_2$ with a constant speed $|\lambda|$.

To simplify the proof of this result, we will appeal to the 
Baker-Campbell-Hausdorff (BCH) formula, which holds in any Lie group. We state it here in the special case of a step 3 Carnot group (of which $\mathbb{F}$ is an example).
For a reference and proof, see \cite{BLU07}.
\begin{theorem}[Step 3 BCH formula]
\label{t-BCH}
    Given $X,Y \in \mathfrak{f}$, we have
    \begin{align*}
        \exp(X)*\exp(Y) = \exp \left( X + Y +\tfrac12[X,Y] + \tfrac{1}{12} \left([X,[X,Y]] + [Y,[Y,X]]\right) \right).
    \end{align*}
\end{theorem}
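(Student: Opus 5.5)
The plan is to derive the formula from the differential of the exponential map rather than from the general Dynkin series. The step-3 nilpotency of $\mathfrak{f}$ makes every series involved a finite polynomial, and every bracket of length at least $4$ vanishes by \eqref{e-stratification}. Since $\exp$ is a diffeomorphism, for $t\in[0,1]$ I define $Z(t)\in\mathfrak{f}$ by
\[
\exp(Z(t))=\exp(tX)*\exp(Y), \qquad Z(0)=Y.
\]
The aim is to find an ODE for $Z$, solve it layer by layer, and evaluate at $t=1$.

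\textbf{Step 1: an ODE for $Z$.} I would invoke the standard formula for the differential of $\exp$ in right-trivialized form:
\[
\tfrac{d}{dt}\exp(Z(t))=dR_{\exp Z(t)}\Big(\tfrac{e^{\mathrm{ad}\,Z}-1}{\mathrm{ad}\,Z}\,Z'(t)\Big),
\]
where $R$ denotes right translation. This can be quoted from \cite{BLU07}, or proved by the usual argument: differentiate $s\mapsto \exp(sZ(t))$ in $t$ and integrate in $s$. The left-hand side equals $\tfrac{d}{dt}\big(\exp(tX)*\exp(Y)\big)$, whose right-trivialization is simply $X$. Hence
\[
\Big(\mathrm{Id}+\tfrac12\mathrm{ad}\,Z+\tfrac16(\mathrm{ad}\,Z)^2\Big)Z'=X .
\]
Because $\mathrm{ad}\,Z$ is nilpotent, this operator can be inverted by the truncated power series of $\tfrac{u}{e^u-1}=1-\tfrac u2+\tfrac{u^2}{12}-\cdots$. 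This gives
\[
Z'=X-\tfrac12[Z,X]+\tfrac1{12}[Z,[Z,X]] ,
\]
with all higher terms being brackets of length at least $4$, which vanish.

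\textbf{Step 2: solve layer by layer.} Write $Z=Y+tX+W$, where $W$ takes values in $V_2\oplus V_3$.
\begin{itemize}
\item[(a)] Only the first-layer components of $Z$ contribute to the double bracket. Since $[tX,X]=0$,
\[
[Z,[Z,X]]=[Y+tX,[Y,X]]=[Y,[Y,X]]+t[X,[Y,X]].
\]
\item[(b)] In the single bracket, $[Z,X]=[Y,X]+[W,X]$. To leading order $W=\tfrac t2[X,Y]$ (mod $V_3$), and any $V_3$-part of $W$ brackets to zero with $X$. Therefore
\[
[W,X]=\tfrac t2\,[[X,Y],X].
\]
\item[(c)] Integrating $Z'$ from $0$ to $1$ gives
\[
Z(1)=Y+X+\tfrac12[X,Y]-\tfrac18[[X,Y],X]+\tfrac1{12}[Y,[Y,X]]+\tfrac1{24}[X,[Y,X]].
\]
\item[(d)] Rewriting with antisymmetry, $-\tfrac18[[X,Y],X]=\tfrac18[X,[X,Y]]$ and $\tfrac1{24}[X,[Y,X]]=-\tfrac1{24}[X,[X,Y]]$. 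The coefficient of $[X,[X,Y]]$ is therefore $\tfrac18-\tfrac1{24}=\tfrac1{12}$, which yields exactly the formula in Theorem~\ref{t-BCH}.
\end{itemize}

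\textbf{Main obstacle.} The main obstacle is Step 1: getting the derivative-of-exp formula with the correct trivialization and sign conventions, since left versus right trivialization swaps $e^{\mathrm{ad}}$ and $e^{-\mathrm{ad}}$. I would check the conventions first on the step-2 term. Once Step 1 is right, the step-2 term must come out as $\tfrac12[X,Y]$, and that fixes the signs. After that, the layer-by-layer integration in Step 2 is routine bookkeeping, using only antisymmetry and the vanishing of brackets of length $\ge4$.
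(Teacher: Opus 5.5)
Your argument is correct, and it differs from what the paper does. The paper gives no derivation of Theorem~\ref{t-BCH}: it quotes the general Baker--Campbell--Hausdorff series from \cite{BLU07} and truncates it, using that brackets of length at least $4$ vanish in $\mathfrak{f}$. You instead derive the step-3 formula directly. The right-trivialized differential of $\exp$ turns $t\mapsto\log(\exp(tX)*\exp(Y))$ into the polynomial ODE $Z'=X-\tfrac12[Z,X]+\tfrac1{12}[Z,[Z,X]]$, which you integrate in two passes, first modulo $V_3$ and then exactly. The integration in (c) and the bookkeeping $\tfrac18-\tfrac1{24}=\tfrac1{12}$ are right. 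Your trivialization convention is also the correct one for the paper's bracket $[X,Y]=XY-YX$ on left-invariant fields, since then $\mathrm{Ad}_{\exp Z}=e^{\mathrm{ad}\,Z}$. The wrong trivialization would just replace $\mathrm{ad}$ by $-\mathrm{ad}$, i.e.\ compute $\log(\exp(Y)*\exp(X))$, so calibrating on the quadratic term does detect it.

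As for what each route buys: the citation costs nothing but rests on the full formal-series theorem. Your route needs only the $d\exp$ formula plus nilpotency, and it exhibits the coefficients $\tfrac12$ and $\tfrac1{12}$ explicitly. It also works unchanged in any simply connected nilpotent Lie group of step at most $3$, once $V_2\oplus V_3$ and $V_3$ are replaced by the second and third terms of the lower central series.

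One small wording fix. The theorem allows arbitrary $X,Y\in\mathfrak{f}$, not only horizontal ones, so the justification in (a) should not be ``only the first-layer components of $Z$ contribute.'' It should instead be that $W,[Y,X]\in V_2\oplus V_3$ and $[W,X]\in V_3$, so $[W,[Y,X]]$ and $[Z,[W,X]]$ vanish. The displayed identity in (a) is correct as written.
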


\begin{proof}[Proof of Proposition~\ref{p-staircase}]
It suffices to show that $(0,0,0,\lambda^3,0)$ and $(0,0,0,0,\lambda^3)$ can each be written as a product of  $\exp(\lambda V_k)$ for $k = 0,1,\dots,7$, where $V_k \in \{\pm X_1, \pm X_2\}$ for each $k$.
Indeed, after doing so, we can then define the curve $\zeta_\lambda$ so that $\zeta_\lambda(0)=0$ and
\[
\zeta_\lambda(t) = \zeta_\lambda(k) * \exp(\lambda (t-k) V_{k})
\]
for all $k \in \{0,1,\dots,7\}$ and $t \in [k,k+1]$. 

To achieve this, first fix $\lambda, \mu \in \mathbb{R}$. Using the BCH formula from Theorem~\ref{t-BCH} and \eqref{e-stratification},
\[\exp(\mu X_{2})*\exp(\lambda X_{1})=\exp \left( \mu X_{2}+\lambda X_{1}+\tfrac{1}{2}\lambda \mu X_{3} -\tfrac{1}{12}\lambda \mu^{2}X_{5}+ \tfrac{1}{12}\lambda^{2}\mu X_{4} \right)
\]
Replacing $\mu$ by $-\mu$ and $\lambda$ by $-\lambda$, we have
\[
\exp(-\mu X_{2})*\exp(-\lambda X_{1})=\exp \left( -\mu X_{2}-\lambda X_{1}+\tfrac{1}{2}\lambda \mu X_{3} +\tfrac{1}{12}\lambda \mu^{2}X_{5}- \tfrac{1}{12}\lambda^{2}\mu X_{4} \right).
\]
Again applying the BCH formula gives
\[ 
\exp(\mu X_{2})*\exp(\lambda X_{1})*\exp(-\mu X_{2})*\exp(-\lambda X_{1})
=
\exp\left( \lambda \mu X_{3}-\tfrac{1}{2}\lambda^{2}\mu X_{4} - \tfrac{1}{2}\lambda \mu^{2}X_{5} \right).  
\]
Write $F(\lambda,\mu) = \exp(\mu X_{2})*\exp(\lambda X_{1})*\exp(-\mu X_{2})*\exp(-\lambda X_{1})$.
Then we have
\[
F(\lambda, -\lambda)*F(-\lambda, -\lambda)= \exp(\lambda^{3} X_{4})
\]
which implies that $\exp(\lambda^{3} X_{4}) = (0,0,0,\lambda^{3},0)$
can be written as a product of 8 points of the form $\exp(\lambda V)$ for $V \in \{\pm X_1,\pm X_2\}$. 
On the other hand,
\[
F(-\lambda, \lambda)*F(-\lambda, -\lambda) = \exp(\lambda^{3}X_{5})
\]
implying that $\exp(\lambda^{3}X_{5})= (0,0,0,0,\lambda^{3})$ can be written similarly. 
\end{proof}

Since distances in $\mathbb{F}$ are obtained by infimizing over the lengths of horizontal paths, we have the following.
\begin{corollary}
   \label{c-verticalpath}
For any $\lambda \in \mathbb{R}$ and $p \in \mathbb{F}$ of the form $(0,0,0,\lambda^{3},0)$ or $(0,0,0,0,\lambda^{3})$, 
$d_{c}(0,p) \leq 8|\lambda|$.
\end{corollary}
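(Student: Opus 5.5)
The plan is to read Corollary~\ref{c-verticalpath} straight off Proposition~\ref{p-staircase} and the definition of $d_c$ as an infimum. Fix $\lambda\in\mathbb{R}$ and let $p$ be either $(0,0,0,\lambda^3,0)$ or $(0,0,0,0,\lambda^3)$. Proposition~\ref{p-staircase} supplies a curve $\zeta_\lambda\colon[0,8]\to\mathbb{F}$ with $\zeta_\lambda(0)=0$ and $\zeta_\lambda(8)=p$, whose restriction to each $[k,k+1]$ is a horizontal $|\lambda|$-segment. The only work is to confirm that $\zeta_\lambda$ is an admissible competitor in the definition of $d_c$ and that its length is at most $8|\lambda|$.

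\textbf{Step 1: $\zeta_\lambda$ is horizontal on $[0,8]$.} By \eqref{e-nicederivativeform}, on each $[k,k+1]$ the curve satisfies $\zeta_\lambda'(t)=|\lambda|V_k(\zeta_\lambda(t))$ with $V_k\in\{\pm X_1,\pm X_2\}$. So each piece is smooth, and the concatenation is continuous and piecewise smooth, hence absolutely continuous. Its velocity has the form \eqref{dotgamma} away from the finitely many breakpoints, with controls $u_1,u_2$ piecewise constant and taking values in $\{0,\pm|\lambda|\}$. These controls are in $L^1$, so $\zeta_\lambda$ is horizontal in the sense of Definition~\ref{d-horizontal}.

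\textbf{Step 2: compute the length.} Since length is additive over subintervals, \eqref{e-length} gives
\[
\ell(\zeta_\lambda)=\sum_{k=0}^{7}\ell\big(\zeta_\lambda|_{[k,k+1]}\big)=8|\lambda|,
\]
as Proposition~\ref{p-staircase} already records. The definition of $d_c$ then yields $d_c(0,p)\le \ell(\zeta_\lambda)=8|\lambda|$.

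I expect no real obstacle. The only point needing care is that a concatenation of horizontal segments is a single horizontal curve, and this follows because absolute continuity and the almost-everywhere identity \eqref{dotgamma} both survive gluing at finitely many points.
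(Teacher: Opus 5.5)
Your proposal is correct and matches the paper's argument: the paper deduces the corollary in one line from Proposition~\ref{p-staircase} and the definition of $d_c$ as an infimum of lengths of horizontal curves. Your Step~1 just spells out the routine fact that the concatenation of the eight segments is a horizontal curve, which the paper takes for granted.
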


\subsection{Change in heights of a horizontal curve} We next observe how derivatives
of the first two coordinates of a horizontal curve in $\mathbb{F}$ constrain the changes in its other coordinates.
This will be useful when considering $C^{1}_H$ curves moving in directions close to $\pm X_{1}$ or $\pm X_{2}$.

\begin{lemma}\label{unreachable}
Let $\gamma\colon [a,b]\to \mathbb{F}$ be a horizontal curve and fix $s,t \in [a,b]$ with $s<t$. Then the following statements hold. 
\begin{enumerate}
    \item If $\gamma_{1}'\geq 0$ almost everywhere on $[s,t]$, then \[\gamma_{5}(t)-\gamma_{5}(s) \leq \tfrac{1}{2} \gamma_{1}(t)(\gamma_{2}(t))^{2}- \tfrac{1}{2}\gamma_{1}(s)(\gamma_{2}(s))^{2}.\]
    \item If $\gamma_{1}'\leq 0$ almost everywhere on $[s,t]$, then \[\gamma_{5}(t)-\gamma_{5}(s) \geq \tfrac{1}{2} \gamma_{1}(t)(\gamma_{2}(t))^{2}- \tfrac{1}{2}\gamma_{1}(s)(\gamma_{2}(s))^{2}.\]
    \item If $\gamma_{2}'\geq 0$ almost everywhere on $[s,t]$, then $\gamma_{4}(t)-\gamma_{4}(s)\geq 0$.
    \item If $\gamma_{2}'\leq 0$ almost everywhere on $[s,t]$, then $\gamma_{4}(t)-\gamma_{4}(s)\leq 0$.
\end{enumerate}
\end{lemma}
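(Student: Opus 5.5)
The plan is to integrate the horizontality relations of Lemma~\ref{lifting} over $[s,t]$, rewrite each integrand as an exact derivative plus a term of fixed sign, and read off the sign from the hypothesis. Since $\gamma$ is horizontal it is absolutely continuous, and Lemma~\ref{lifting} gives, almost everywhere on $[a,b]$,
\[
\gamma_4' = \tfrac12 \gamma_2'\gamma_1^2, \qquad \gamma_5' = \gamma_2'\gamma_1\gamma_2 .
\]
The fundamental theorem of calculus for absolutely continuous functions then gives $\gamma_i(t)-\gamma_i(s)=\int_s^t\gamma_i'$ for $i=4,5$.

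Items (3) and (4) are immediate from this. We have $\gamma_4(t)-\gamma_4(s)=\tfrac12\int_s^t \gamma_2'\gamma_1^2$. Because $\gamma_1^2\geq 0$, the integrand has the same sign as $\gamma_2'$ almost everywhere on $[s,t]$, so the increment is nonnegative under (3) and nonpositive under (4).

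For items (1) and (2), I would use the product rule. The function $\tfrac12\gamma_1\gamma_2^2$ is a product of absolutely continuous functions on a compact interval, hence absolutely continuous, and almost everywhere
\[
\tfrac{d}{d\tau}\Big(\tfrac12\gamma_1\gamma_2^2\Big) = \tfrac12\gamma_1'\gamma_2^2 + \gamma_1\gamma_2\gamma_2' = \tfrac12\gamma_1'\gamma_2^2 + \gamma_5' .
\]
Integrating from $s$ to $t$ yields
\[
\gamma_5(t)-\gamma_5(s) = \tfrac12\gamma_1(t)\gamma_2(t)^2 - \tfrac12\gamma_1(s)\gamma_2(s)^2 - \tfrac12\int_s^t \gamma_1'\gamma_2^2 .
\]
Since $\gamma_2^2\geq0$, the last integral is nonnegative when $\gamma_1'\geq0$ almost everywhere, which gives (1). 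It is nonpositive when $\gamma_1'\leq 0$ almost everywhere, which gives (2).

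There is no real obstacle here. The only care needed is in justifying the product rule and the fundamental theorem of calculus for the absolutely continuous function $\gamma_1\gamma_2^2$, which is standard because products of absolutely continuous functions on compact intervals are absolutely continuous.
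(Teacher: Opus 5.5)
Your proposal is correct and follows the paper's proof essentially verbatim. You integrate the relations from Lemma~\ref{lifting}, apply integration by parts (your product rule for $\tfrac12\gamma_1\gamma_2^2$) to get the identity with the remainder term $-\tfrac12\int_s^t\gamma_1'\gamma_2^2$ for items (1)--(2), and read off the sign of $\tfrac12\int_s^t\gamma_2'\gamma_1^2$ for items (3)--(4).
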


\begin{proof}
We first prove (1) and (2). Applying \eqref{lifteq} and integration by parts yields
\begin{align*}
\gamma_{5}(t)-\gamma_{5}(s)&= \frac{1}{2}\int_{s}^{t}\gamma_{1}(\theta)\frac{d}{d\theta}\left((\gamma_{2}(\theta))^{2}\right) \, d\theta\\
&=\frac{1}{2} [\gamma_{1}(\theta)(\gamma_{2}(\theta))^{2}]_{s}^{t}-\frac{1}{2}\int_{s}^{t}\gamma_{1}'(\gamma_{2}^{2}) \\
&=\frac{1}{2}\gamma_{1}(t)(\gamma_{2}(t))^{2}-\frac{1}{2}\gamma_{1}(s)(\gamma_{2}(s))^{2}-\frac{1}{2}\int_{s}^{t}\gamma_{1}'(\gamma_{2}^{2}).
\end{align*}
From this, (1) and (2) follow immediately.

To see (3) and (4), simply notice that \eqref{lifteq} implies $\gamma_{4}(t)-\gamma_{4}(s)=\tfrac12 \int_{s}^{t}\gamma_{2}'(\gamma_{1}^{2})$. The result then follows.
\end{proof}

\section{Strong Failure of the Lusin Property in the Cartan group}\label{nolusin}

In this section we construct a Lipschitz curve $\gamma\colon [0,1] \to \mathbb{F}$ that meets any $C^{1}_H$ curve $\Gamma\colon [0,1] \to \mathbb{F}$ in a set of zero measure, thereby proving Theorem~\ref{lusinzero}.
The curve $\gamma$ will be the uniform limit of a sequence of Lipschitz curves $\{\gamma_n\}_{n=1}^\infty$ that will be defined inductively. Each curve in the sequence will be a concatenation of an increasing number of horizontal segments providing 
perturbations in the $x_4$ and $x_5$ directions.
Each successive curve will be constructed by replacing every horizontal segment in the previous curve with a scaled, piecewise segmented ``modification curve'' a la the Koch snowflake.


\subsection{Modification of Horizontal Lines}\label{modification}

In this subsection we define the ``modification curves'' that will be used to replace each horizontal segment piece in our inductive construction.
Before beginning, we observe that by the BCH formula (Theorem~\ref{t-BCH}) $\exp(X_i)$ commutes with both $\exp(X_4)$ and $\exp(X_5)$ for $i=1,\dots,5$. In particular for any $x_1,\dots,x_5 \in \mathbb{R}$,
\eqref{e-secondexp} gives
\begin{equation}
    \label{e-nicegroupop}
    \begin{aligned}
        (x_1,x_2,& x_3,0,0) * (0,0,0,x_4,x_5)\\
        &= (\exp(x_3X_3)*\exp(x_2X_2)*\exp(x_1X_1))*(\exp(x_5X_5) * \exp(x_4X_4)) \\
        &=
        \exp(x_5X_5) * \exp(x_4X_4) * \exp(x_3X_3)*\exp(x_2X_2)*\exp(x_1X_1)\\
        &=(x_1,x_2,x_3,x_4,x_5).
    \end{aligned}
\end{equation}
A similar argument gives $(x_1,x_2,x_3,x_4,x_5)=(0,0,0,x_4,x_5)*(x_1,x_2,x_3,0,0)$.

Throughout this subsection 
we fix the following parameters:
\begin{itemize}
    \item an interval $[a,b] \subset \mathbb{R}$ of length $L:=b-a\leq 1$,
    \item a constant $1 \leq \lambda<\frac32$,
    \item an integer $Q\geq 5$. 
\end{itemize}
Intuitively, $[a,b]$ represents the interval parameterizing a horizontal segment to be modified,
$\lambda$ is the speed of this segment, and $Q$ indexes both the number of pieces into which this segment will be divided as well as the size of the resulting perturbations in the $x_4$- and $x_5$-directions.
We also define
\begin{equation}\label{lambda'}
\lambda':=\left(1+\frac{2}{3Q} \right) \lambda
\end{equation} 
and note $\lambda' < \tfrac{17}{10}$.
This represents the speed of the new curve after modification.


Begin the construction by partitioning $[a,b]$ into $3Q+2$ consecutive intervals $I_i$ of equal length 
as follows:
\[ I_{i}=[a_i,b_i] := \left[ a+(i-1)\frac{L}{3Q+2}, a+i\frac{L}{3Q+2}\right], \qquad \text{for } 1\leq i\leq 3Q+2.\]
We point out that for each $i \in \{1,\dots,3Q+2\}$, 
\[
m(I_{i})=\frac{L}{3Q+2}\leq \frac{L}{17}.
\]
On the other hand, if $A$ is the union of any $Q$ such intervals,
\begin{equation}\label{Qintervals}
m(A)=\frac{QL}{3Q+2}=\frac{L}{3+\frac{2}{Q}}\geq \frac{L}{3+2}= \frac{L}{5}.
\end{equation}

\begin{construction}
    \label{c-construction}   

Write $\mu := \frac{\lambda L}{24Q}$, and suppose that $\zeta = \zeta_{\mu}$ is the ``staircase'' curve  connecting the origin to $\left(0,0,0,0,\mu^{3}\right)$ defined in Proposition~\ref{p-staircase}.
Note that the curve $t\mapsto \zeta\left(\frac{8(t-a_{Q+1})}{b_{Q+1}-a_{Q+1}}\right)$ defined on $I_{Q+1}$ is a concatenation of 8 horizontal $\lambda'$-segments
by \eqref{e-nicederivativeform} since
$$
\mu \cdot
\tfrac{8}{b_{Q+1} - a_{Q+1}}
=
\tfrac{\lambda L}{24Q}\cdot 8 \cdot \tfrac{3Q+2}{L} = \lambda',
$$
Similarly, $t\mapsto \zeta\left(\frac{8(b_{2Q+2}-t)}{b_{2Q+2}-a_{2Q+2}}\right)$ defined on $I_{2Q+2}$ is a concatenation of 8 horizontal $\lambda'$-segments starting at $\left(0,0,0,0,\mu^{3}\right)$ and ending at the origin.

Define $\alpha^+\colon [a,b] \to \mathbb{F}$ as follows:
\[
\alpha^+(t) = 
\begin{cases}
    \exp((t-a)\lambda'X_1) &\text{if } t \in I_{1}\cup \cdots \cup I_{Q}\\
    \left(\tfrac{\lambda L}{3},0, 0,0, 0\right) * \zeta\left(\frac{8(t-a_{Q+1})}{b_{Q+1}-a_{Q+1}}\right) 
    &\text{if } t \in I_{Q+1}\\
    \left(\tfrac{\lambda L}{3},0, 0,0, \mu^{3} \right) 
    *
    \exp((t-a_{Q+2})\lambda'X_1) &\text{if } t \in I_{Q+2}\cup \cdots \cup I_{2Q+1}\\
    \left(\tfrac{2\lambda L}{3},0, 0,0, 0\right) * \zeta\left(\frac{8(b_{2Q+2}-t)}{b_{2Q+2}-a_{2Q+2}}\right) 
    &\text{if } t \in I_{2Q+2}\\
    \left(\tfrac{2\lambda L}{3},0, 0,0, 0\right) * \exp((t-a_{2Q+3})\lambda'X_1) &\text{if } t \in I_{2Q+3}\cup \cdots \cup I_{3Q+2}
\end{cases}
\]
Let us parse this ``pedestrian overpass'' construction intuitively.
\begin{itemize}
\item (Along a footpath) On the first long interval $I_{1}\cup \cdots \cup I_{Q}$, the curve $\alpha^+$ is simply a horizontal $\lambda'X_1$-segment starting at the origin i.e. it is a Euclidean line segment from the origin to $\left( \tfrac{\lambda'QL}{3Q+2},0,0,0,0\right) = \left(\tfrac{\lambda L}{3},0, 0,0, 0\right)$.
\item (Up 8 flights of stairs) On $I_{Q+1}$, the curve then traverses 8 horizontal $\lambda'$-segments, and \eqref{e-nicegroupop} guarantees that this ends at the point $\left(\tfrac{\lambda L}{3},0, 0,0, \mu^{3} \right)$.
\item (Across the overpass) The curve then follows one horizontal $\lambda'X_1$-segment on the long middle interval $I_{Q+2}\cup \cdots \cup I_{2Q+1}$.
According to \eqref{e-nicegroupop}, this is once again a Euclidean line segment, terminating this time at $\left(\tfrac{2\lambda L}{3},0, 0,0, \mu^{3} \right) $.
\item (Down 8 flights of stairs) On $I_{2Q+2}$, the curve again traverses 8 horizontal $\lambda'$-segments
(this time in reverse) from $\left(\tfrac{2\lambda L}{3},0, 0,0, \mu^{3} \right) $ to $\left(\tfrac{2\lambda L}{3},0, 0,0, 0 \right)$. 
\item (Along a footpath) The curve is again a single horizontal $\lambda'X_1$-segment on the final long interval $I_{2Q+3}\cup \cdots \cup I_{3Q+2}$, 
and again one may apply \eqref{e-nicegroupop} to see that this is a Euclidean line segment terminating at $(\lambda L,0, 0,0, 0)$.
\end{itemize}

We then define the curve $\alpha^- \colon [a,b]\to \mathbb{F}$ to be $\alpha^- (t)=(-\lambda L,0,0,0,0)*\alpha^+ (a+b-t)$ and note that $\alpha^- (a)=0$ and $\alpha^- (b)=(-\lambda L,0,0,0,0)$.

Let us now define $\beta^+$.
Suppose that $\xi = \zeta_{- \mu}$ is the curve from Proposition~\ref{p-staircase} connecting the origin to $\left(0,0,0,-\mu^{3},0\right)$.
Define $\beta^+\colon [a,b] \to \mathbb{F}$ as follows:
$$
\beta^+(t) = 
\begin{cases}
    \exp(\lambda'(t-a)X_2) &\text{if } t \in I_{1}\cup \cdots \cup I_{Q}\\
    \left(0,\tfrac{\lambda L}{3}, 0,0, 0\right) * \xi\left(\frac{8(t-a_{Q+1})}{b_{Q+1}-a_{Q+1}}\right) 
    &\text{if } t \in I_{Q+1}\\
    \left(0,\tfrac{\lambda L}{3},0, -\mu^{3},0 \right) 
    *
    \exp(\lambda'(t-a_{Q+2})X_2) &\text{if } t \in I_{Q+2}\cup \cdots \cup I_{2Q+1}\\
    \left(0,\tfrac{2\lambda L}{3}, 0,0, 0\right) * \xi\left(\frac{8(b_{2Q+2}-t)}{b_{2Q+2}-a_{2Q+2}}\right) 
    &\text{if } t \in I_{2Q+2}\\
    \left(0,\tfrac{2\lambda L}{3}, 0,0, 0\right) * \exp( \lambda' (t-a_{2Q+3})X_2) &\text{if } t \in I_{2Q+3}\cup \cdots \cup I_{3Q+2}
\end{cases}
$$
We can once again understand this curve using the ``pedestrian overpass'' intuition above where motion along $X_1$ is replaced by motion along $X_2$ 
and vertical shifts in the $x_5$ coordinate are replaced by vertical shifts in the $x_4$ coordinate in the opposite direction.

Finally, define the curve $\beta^- \colon [a,b]\to \mathbb{F}$ given by $\beta^- (t)=(0,-\lambda L,0,0,0) *\beta^+(a+b-t)$, and note that $\beta^- (a)=0$ and $\beta^- (b)=(0,-\lambda L,0,0,0)$.

\end{construction}

Let us now gather some useful properties of these curves. 

\begin{proposition}\label{p-distances}
The curves $\alpha^+$, $\alpha^-$, $\beta^+$, and $\beta^-$ defined in Construction~\ref{c-construction} have the following structure:
\begin{enumerate}
    \item \label{enum-easy1} $\alpha^\pm (a) = \beta^\pm (a) = (0,0,0,0,0)$.
    \item \label{enum-easy2}$\alpha^\pm(b) = (\pm \lambda (b-a),0,0,0,0)$ and $\beta^\pm(b) = (0,\pm \lambda (b-a), 0, 0, 0)$.
    \item \label{enum-derivative} Fix $1\leq i\leq 3Q+2$ with $i\neq Q+1$ and $i\neq 2Q+2$. Then $\alpha^{\pm}|_{I_{i}}$ is a horizontal $\pm \lambda' X_{1}$-segment and $\beta^{\pm}|_{I_{i}}$ is a horizontal $\pm \lambda'X_{2}$ segment.
    \item \label{i-smallpartition} Suppose $\{J_j\}_{j=1}^N$ is a partition of $[a,b]$ into $N$ consecutive intervals of equal length where $N$ is a positive multiple of $8(3Q+2)$. Then for every $j \in \{1,\dots,N\}$, each of $\alpha^\pm|_{J_j}$ and $\beta^\pm|_{J_j}$ is a 
    horizontal $\lambda'$-segment.
\end{enumerate}
\end{proposition}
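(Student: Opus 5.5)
Items (1)--(3) are direct verifications from Construction~\ref{c-construction}, and item (4) reduces to them plus Proposition~\ref{p-staircase}. I would treat $\alpha^+$ and $\beta^+$ first and then transfer everything to $\alpha^-$ and $\beta^-$.

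For the transfer, $\alpha^-(t)=p*\alpha^+(a+b-t)$ with $p=(-\lambda L,0,0,0,0)$. Left translation by a fixed point preserves horizontal segments, and time reversal on an interval turns a $\lambda' V$-segment into a $\lambda'(-V)$-segment. Indeed, if $\eta(t)=\eta(c)*\exp(\lambda'(t-c)V)$ on $[c,d]$, then
\[
\eta(a+b-t)=\eta(d)*\exp(\lambda'(t-(a+b-d))(-V)).
\]
Hence every structural property of $\alpha^+$ and $\beta^+$ passes to $\alpha^-$ and $\beta^-$ with signs flipped.

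\textbf{Item (1).} For $\alpha^+$ and $\beta^+$ this is just $\exp(0)=0$. For $\alpha^-$ we compute $\alpha^-(a)=p*\alpha^+(b)$, and by \eqref{e-groupoperation} and \eqref{e-nicegroupop} this equals $(-\lambda L+\lambda L,0,0,0,0)=0$. The same computation works for $\beta^-$.

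\textbf{Item (2).} I would use the endpoint computation in the bullet list after the construction. It shows $\alpha^+(b)=(\tfrac{2\lambda L}{3},0,0,0,0)*(\tfrac{\lambda L}{3},0,0,0,0)=(\lambda L,0,0,0,0)$, using $\lambda' Q L/(3Q+2)=\lambda L/3$. For $\alpha^-$ we get $\alpha^-(b)=p*\alpha^+(a)=p$. Along the way I would also confirm continuity at the junctions $a_{Q+1}, a_{Q+2}, a_{2Q+2}, a_{2Q+3}$ using \eqref{e-nicegroupop}, since the segment pieces need consistent basepoints.

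\textbf{Item (3).} On each long block the curve has the form $q*\exp(\lambda'(t-c)X_1)$ for a constant $q$. So on any subinterval $I_i$ with left endpoint $a_i$ we can rewrite it as
\[
(q*\exp(\lambda'(a_i-c)X_1))*\exp(\lambda'(t-a_i)X_1)=\alpha^+(a_i)*\exp(\lambda'(t-a_i)X_1),
\]
because one-parameter subgroups satisfy $\exp(sX)*\exp(tX)=\exp((s+t)X)$. This is exactly Definition~\ref{d-segment}. The identical argument with $X_2$ handles $\beta^+$.

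\textbf{Item (4).} Since $N$ is a multiple of $8(3Q+2)$, each $J_j$ lies inside one $I_i$, and when $i\in\{Q+1,2Q+2\}$ it lies inside one of the eight subintervals of $I_i$ of length $|I_i|/8$. For $i\ne Q+1,2Q+2$, the restriction of a horizontal segment to a subinterval is again a horizontal segment, by the rewriting used in item (3). For $i=Q+1$, the map $t\mapsto 8(t-a_{Q+1})/(b_{Q+1}-a_{Q+1})$ sends each eighth onto some $[k,k+1]$. There $\zeta$ is a horizontal $\mu$-segment $\zeta(k)*\exp(\mu(s-k)V_k)$, so the reparametrized, left-translated curve is
\[
\alpha^+(a')*\exp(\lambda'(t-a')V_k)
\]
for the appropriate left endpoint $a'$, using the speed identity $\mu\cdot 8/|I_{Q+1}|=\lambda'$ computed in the construction. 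For $I_{2Q+2}$ the parametrization is reversed, and I would apply the time-reversal observation above. Note that $\xi=\zeta_{-\mu}$ has speed $|\mu|$, which is why Proposition~\ref{p-staircase} uses $|\lambda|$.

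The main, if modest, obstacle is the bookkeeping in item (4) on the reversed staircase pieces and for $\alpha^-,\beta^-$. The key point there is that a reversed, left-translated segment is still of the form in Definition~\ref{d-segment} with base point at the new left endpoint.
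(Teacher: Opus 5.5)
Your proposal is correct and follows the same route as the paper. Items (1)--(3) are read off directly from Construction~\ref{c-construction}. Item (4) holds because each $J_j$ lies inside one of the eight equal subintervals of some $I_i$, and on each of those every one of the curves is a horizontal $\lambda'$-segment. Your extra bookkeeping (the time-reversal identity, the junction checks, and the speed identity $\mu\cdot 8/|I_{Q+1}|=\lambda'$) just makes explicit what the paper treats as immediate.
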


\begin{proof} 
Properties (\ref{enum-easy1}),(\ref{enum-easy2}) and (\ref{enum-derivative}) follow immediately from Construction \ref{c-construction}. To see (\ref{i-smallpartition}), note that partitioning each interval $I_i$ into 8 subintervals of equal length ensures that the curves $\alpha^\pm$ and $\beta^\pm$ are equal on each such subinterval to a horizontal $\lambda'$-segment.
Therefore, since $N$ is a multiple of $3Q+2$, partitioning the full interval $[a,b]$ into $N$ consecutive intervals $J_j$ of equal length 
also partitions each $I_i$ into $N/(3Q+2)$ intervals of equal length.
Since $N/(3Q+2)$ is a multiple of 8, this verifies (\ref{i-smallpartition}).
\end{proof}

\begin{proposition}\label{constructiondistances}
The curves $\alpha^+$, $\alpha^-$, $\beta^+$, and $\beta^-$ defined in Construction~\ref{c-construction} satisfy the following distance estimates:
\begin{enumerate}
    \item \label{enum-compact} $d_{c}(\alpha^{\pm}(t),0)\leq \tfrac{17}{10}$ 
    and $d_{c}(\beta^{\pm}(t),0)\leq \tfrac{17}{10}$
    for all $t\in [a,b]$.
    \item \label{enum-euclideanbound} $d_{c}(\exp(\pm  \lambda (t-a)  X_1),\alpha^\pm (t))\leq \frac{2(b-a)}{Q}$ and $d_{c}(\exp(\pm  \lambda (t-a) X_2) , \beta^\pm (t))
    \leq \frac{2(b-a)}{Q}$ for all $t\in [a,b]$.
\end{enumerate}
\end{proposition}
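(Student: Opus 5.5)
Both estimates reduce to bounding the length of explicit horizontal curves. For (1), I will use that each of $\alpha^\pm$, $\beta^\pm$ is a concatenation of horizontal $\lambda'$-segments (Proposition~\ref{p-distances}) that starts at the origin. The restriction of the curve to $[a,t]$ is then a horizontal curve from $0$ to $\alpha^\pm(t)$ (resp.\ $\beta^\pm(t)$). By \eqref{e-length}, summing over the pieces, its length is at most $\lambda'(t-a)\le \lambda' L$. Since $L\le 1$ and $\lambda'<\tfrac{17}{10}$, this gives $d_c\le \tfrac{17}{10}$. For $\alpha^-$ and $\beta^-$ the same argument applies directly: they are concatenations of segments starting at $0$, because left translation preserves horizontality and speed.

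For (2), I treat $\alpha^+$ first; the others are analogous. I split $[a,b]$ into the five pieces of Construction~\ref{c-construction}. The key observation is that $\alpha^+(t)$ differs from $\exp(\lambda(t-a)X_1)=(\lambda(t-a),0,0,0,0)$ by a short horizontal path, and this follows from left invariance of $d_c$.

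On $I_1\cup\cdots\cup I_Q$, both points lie on the $x_1$-axis. By Remark~\ref{rem-distance}, their distance is $(\lambda'-\lambda)(t-a)\le \tfrac{2\lambda}{3Q}L\le \tfrac{L}{Q}$.

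On the middle long piece, \eqref{e-nicegroupop} shows that $\alpha^+(t)=(x,0,0,0,\mu^3)=(x,0,0,0,0)*(0,0,0,0,\mu^3)$. Then the triangle inequality gives a bound of $|x-\lambda(t-a)|+d_c(0,(0,0,0,0,\mu^3))$. By Corollary~\ref{c-verticalpath}, the second term is at most $8\mu=\tfrac{\lambda L}{3Q}$. For the first term, $x=\tfrac{\lambda L}{3}+\lambda'(t-a_{Q+2})$. A direct computation with $a_{Q+2}=a+\tfrac{(Q+1)L}{3Q+2}$ shows that $|x-\lambda(t-a)|$ is bounded by roughly $\lambda L/(3Q)$ plus the length of one $I_i$ times $\lambda$. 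Each of these is $O(L/Q)$, with the explicit constants summing to at most $2L/Q$ because $\lambda<\tfrac32$.

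On the staircase pieces $I_{Q+1}$ and $I_{2Q+2}$, I compare $\alpha^+(t)$ to the staircase's base point $(\tfrac{\lambda L}{3},0,0,0,0)$ (resp.\ $(\tfrac{2\lambda L}{3},0,0,0,0)$). The distance is at most the length of the traversed portion of $\zeta$, which is at most $8\mu=\tfrac{\lambda L}{3Q}$. I then compare the base point to $\exp(\lambda(t-a)X_1)$ on the $x_1$-axis; since $t$ ranges over an interval of length $L/(3Q+2)$, this contributes at most about $\lambda L/(3Q)$. The final piece works like the first, since $\alpha^+$ ends at $(\lambda L,0,0,0,0)$ with $x_1$-speed $\lambda'$.

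For $\alpha^-$, I use $\alpha^-(t)=(-\lambda L,0,0,0,0)*\alpha^+(a+b-t)$ together with $\exp(-\lambda(t-a)X_1)=(-\lambda L,0,0,0,0)*\exp(\lambda(b-t)X_1)$. Left invariance then reduces this case to the estimate for $\alpha^+$ at the parameter $a+b-t$, via $b-t=(a+b-t)-a$. For $\beta^\pm$ the argument is identical, with $X_2$, the $x_4$-direction, and $\xi=\zeta_{-\mu}$ in place of $X_1$, the $x_5$-direction, and $\zeta$; the needed factorization $(0,y,0,-\mu^3,0)=(0,y,0,0,0)*(0,0,0,-\mu^3,0)$ again comes from \eqref{e-nicegroupop}.

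The main obstacle is the bookkeeping of constants needed to land exactly on $\tfrac{2L}{Q}$. This will use $\lambda<\tfrac32$, $Q\ge5$, and $(\lambda'-\lambda)L=\tfrac{2\lambda L}{3Q}<\tfrac{L}{Q}$, with each piece's error split into an $x_1$-axis drift of at most $L/Q$ and a vertical or staircase excursion of at most $\lambda L/(3Q)<L/(2Q)$.
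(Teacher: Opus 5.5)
Your proposal is correct and follows essentially the paper's proof. Part (1) uses the length $\lambda'L<\tfrac{17}{10}$ of the concatenated $\lambda'$-segments, and part (2) uses the same five-piece case split via left invariance, Remark~\ref{rem-distance}, the factorization \eqref{e-nicegroupop} and Corollary~\ref{c-verticalpath}. The minor differences are these: on the staircase intervals you compare directly to the base point instead of routing through the endpoint estimates at $b_Q$ and $a_{2Q+3}$, which works because $a+\tfrac{L}{3}\in I_{Q+1}$ and $a+\tfrac{2L}{3}\in I_{2Q+2}$; and you make explicit the left-invariance reduction of $\alpha^-$ to $\alpha^+$ that the paper only calls ``similar.'' Your ``roughly'' on the middle piece can be made exact, since there $|x-\lambda(t-a)|=\tfrac{\lambda}{3Q}|2(t-a)-L|\le\tfrac{\lambda L}{3Q}$.
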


This second property shows that the ``modification curves'' $\alpha^\pm$ and $\beta^\pm$  do not deviate too far from the horizontal segments they modify.

\begin{proof}
It suffices to describe the case of the curve $\alpha^+$, since the other cases are similar. 
To see (\ref{enum-compact}), 
notice that $\alpha^+$ is simply a concatenation of 
horizontal $\lambda'$-segments.
Thus
\eqref{e-length} implies
$\ell(\alpha^+) = \lambda' L$,
and hence the assumptions $Q \geq 5$, $\lambda < \tfrac32$, and $L \leq 1$ imply
\[
d_{c}(\alpha^+(t),0)
\leq \ell(\alpha^{+})
\leq
\lambda' L
=(1+\tfrac{2}{3Q})\lambda L
<
\tfrac{17}{10}. 
\]



We now prove (\ref{enum-euclideanbound}). 
To simplify notation, assume without loss of generality that $a=0$ and $b=L$. 
Recall $\lambda < \tfrac32$ and $\lambda'=(1+\tfrac{2}{3Q})\lambda$
so that $\lambda' - \lambda = \tfrac{2 \lambda}{3Q}$. 

By Remark~\ref{rem-distance}, 
we have for all $t\in I_{1}\cup \cdots \cup I_{Q}$ that
\begin{align}\label{IQest}
d_{c}(\alpha^+(t),\exp(\lambda t X_{1})) = 
d_{c}(\exp(\lambda' t X_{1}),\exp(\lambda t X_{1}))
=
\left| t\lambda' -t\lambda\right|
= \tfrac{2 \lambda t}{3Q}
< \tfrac{L}{Q}.
\end{align}

Next suppose $t\in I_{Q+1}= [a_{Q+1},b_{Q+1}]$.
As above, by \eqref{e-length}
\[
d_{c}\left(\alpha^+(t),\alpha^+\left( a_{Q+1}
\right)\right)
\leq
\ell(\alpha^+|_{I_{Q+1}})
=
\lambda'\tfrac{L}{3Q+2} = \tfrac{\lambda L}{3Q}
<
\tfrac{L}{2Q}.\]
Similarly, 
since $m(I_{Q+1})=\tfrac{L}{3Q+2}$, 
\[
d_{c}\left(\exp \left( \lambda  a_{Q+1}
X_{1} \right),\exp(\lambda tX_{1})\right)
\leq 
\lambda \tfrac{L}{3Q+2} < \tfrac{L}{2Q}.
\]
Since  $a_{Q+1} = b_Q$ is also an endpoint of $I_{Q}$, \eqref{IQest} gives
\[ 
d_{c}\left(\alpha^+ \left( a_{Q+1} 
\right), \exp \left(\lambda  a_{Q+1}
X_{1} \right) \right)
=
d_{c}\left(\alpha^+ \left(b_Q \right), \exp \left(\lambda b_Q X_{1} \right) \right)
< \tfrac{L}{Q}.
\]
Combining the previous three estimates, we have for all $t\in I_{Q+1}$
\[ 
d_{c}(\alpha^+(t),\exp(\lambda tX_{1}))
\leq \tfrac{2L}{Q}.
\]

Next suppose $t\in I_{Q+2}\cup \cdots \cup I_{2Q+1}$. Recall that $\alpha^+|_{I_{Q+2}\cup \cdots \cup I_{2Q+1}}$ is a horizontal $\lambda' X_1$-segment starting at $\alpha^+\left( a_{Q+2}
\right)=
\left(\tfrac{\lambda L}{3},0, 0,0, \left( \frac{\lambda L}{24Q} \right)^{3}\right)$.
Using \eqref{e-nicegroupop} and \eqref{e-groupoperation},
\begin{align*}
\alpha^+(t)
&=
\left(\tfrac{\lambda L}{3},0, 0,0,\left(\tfrac{\lambda L}{24Q}\right)^{3}\right)
*
\exp\left(\lambda' \left(t- a_{Q+2}
\right) X_1 \right)\\
&=
\left(\tfrac{\lambda L}{3}+\lambda' \left(t-
 a_{Q+2}
\right),0,0,0, \left(\tfrac{\lambda L}{24Q} \right)^{3} \right).
\end{align*}
Therefore, 
temporarily writing $t' := \tfrac{\lambda L}{3}+\lambda' \left(t-
 a_{Q+2}
\right)$
where
$a_{Q+2} = \tfrac{(Q+1)L}{3Q+2}$
gives
\begin{align*}
d_{c}(\alpha^+(t),\exp(\lambda tX_{1}))
&\leq
d_{c}\left(\alpha^+(t),\exp\left(t'X_1\right)\right)
+
d_{c}(\exp(t'X_1),\exp(\lambda tX_{1}))
\\
&=d_{c}\left(\left(0,0,0,0, \left( \tfrac{\lambda L}{24Q} \right)^3\right),0\right) 
+
\left|t' -  \lambda t \right|\\
&\leq 
8\left(\frac{\lambda L}{24Q} \right) + t(\lambda' - \lambda) + \left|\frac{\lambda L}{3} - \lambda' 
 a_{Q+2}
\right| 
\\
&\leq 
\frac{\lambda L}{3Q}
+
\frac{L}{Q} + \left|\frac{\lambda L}{3}-\frac{(Q+1)\lambda L}{3Q} \right|\\
&= \frac{2\lambda L}{3Q} + \frac{L}{Q}
\leq \frac{2L}{Q}.
\end{align*}
The equality in the second line above follows directly from the left invariance of the metric (after multiplying both terms by $\exp(t'X_1)^{-1} = (-t',0,0,0,0)$), the commutativity in \eqref{e-nicegroupop},
and Remark~\ref{rem-distance}.
The third line follows from Corollary~\ref{c-verticalpath}.

The case $t\in I_{2Q+3}\cup \cdots \cup I_{3Q+2}$
follows as in \eqref{IQest} since
\[
\tfrac{2 \lambda L}{3} + (t-a_{2Q+3})\lambda' = 
\tfrac{2 \lambda L}{3} + \lambda' t - \tfrac{(2Q+2)\lambda'L}{3Q+2}
=
\lambda' t - \tfrac{2\lambda L}{3Q},
\]
and so
\begin{equation}
    \label{e-symmetricarg}
\begin{aligned}
d_{c}(\alpha^+(t),\exp(\lambda t X_{1})) &= 
d_{c}\left(\exp\left(\left(\lambda' t - \tfrac{2\lambda L}{3Q}\right) X_{1}\right),\exp(\lambda t X_{1})\right)\\
&=
\left| \lambda' t - \tfrac{2\lambda L}{3Q} - \lambda t \right|
=
\tfrac{2\lambda}{3Q}(L-t)
< \tfrac{L}{Q}.
\end{aligned}
\end{equation}

Finally, if $t\in I_{2Q+2}$, we have as before
\[
d_{c}\left(\alpha^+(t),\alpha^+\left(b_{2Q+2}  \right)\right)
<
\tfrac{L}{2Q}
\quad
\text{ and }
\quad
d_{c}\left(\exp \left( \lambda b_{2Q+2}X_{1} \right),\exp(\lambda tX_{1})\right)
< \tfrac{L}{2Q}.
\]
Since $b_{2Q+2} = a_{2Q+3}$, \eqref{e-symmetricarg} gives
\[
d_c(\alpha^+(b_{2Q+2}),\exp(\lambda b_{2Q+2} X_1) < \tfrac{L}{Q}.
\]
Combining these three estimates completes the proof of (\ref{enum-euclideanbound}).
\end{proof}

For the remainder of the paper, fix a constant so that 
\eqref{disugdist} holds for $K = \overline{B(0,2)}$. In particular, choose an integer $\kappa \geq 1$ such that
\begin{equation}\label{distancecomparison}
 |x-y|\leq \kappa d_{c}(x,y) \mbox{ for all }x,y\in \overline{B(0,2)}.
\end{equation}

In the following proposition we show that any horizontal curve that is close to a ``modifying curve'' from Construction~\ref{c-construction} cannot be too well approximated by a $C^{1}_H$ curve in the Lusin sense when 
one of the two horizontal derivatives of the $C^{1}$ curve stay bounded well away from zero. The idea of the proof is that such an assumption forces particular behavior of the vertical coordinates of the $C^{1}_{H}$ curve by Lemma \ref{unreachable}. This then implies it cannot coincide with the given curve in more than one long interval.

\begin{proposition}\label{badintersect}
Suppose $\rho$ is one of the curves $\alpha^+, \alpha^-, \beta^+, \beta^-$ defined in Construction~\ref{c-construction}.
Let $\eta\colon [a,b]\to \mathbb{F}$ be any horizontal curve with $d_{c}(\eta(t),\rho(t))< \tfrac{1}{10\kappa} \left(\frac{\lambda (b-a)}{24Q}\right)^{3}$ for all $t\in [a,b]$.
Suppose $\Gamma\colon [a,b]\to \mathbb{F}$ is any $C^{1}_H$ curve such that $|\Gamma_1'| \geq \tfrac12$ on $[a,b]$ or $|\Gamma_2'| \geq \tfrac12$ on $[a,b]$.
Then
\begin{equation}
\label{e-overlapProp}
m\{t\in [a,b]:\Gamma(t)=\eta(t)\} \leq \tfrac45 (b-a).
\end{equation}
\end{proposition}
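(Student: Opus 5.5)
The plan is to argue by contradiction using only three well-chosen coincidence times, one in each of the three long intervals where $\rho$ is a single horizontal segment. Write $L=b-a$ and $\mu=\frac{\lambda L}{24Q}$. Set
\[
A_1=I_1\cup\cdots\cup I_Q,\qquad A_2=I_{Q+2}\cup\cdots\cup I_{2Q+1},\qquad A_3=I_{2Q+3}\cup\cdots\cup I_{3Q+2}.
\]
Let $E=\{t\in[a,b]:\Gamma(t)=\eta(t)\}$ and suppose $m(E)>\frac45 L$. Then $m([a,b]\setminus E)<\frac L5$. By \eqref{Qintervals}, each $A_k$ has measure at least $\frac L5$, so $m(E\cap A_k)>0$ for $k=1,2,3$. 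I therefore pick points $s_k\in E\cap A_k$, with $s_1<s_2<s_3$. Two separations will be used later: $s_3-s_1\ge m(A_2)\ge \frac L5$ and $s_3-s_2\ge m(I_{2Q+2})>0$.

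Next I record what the coordinates of $\rho$ are at these points. For $\rho=\alpha^+$ this comes from \eqref{e-nicegroupop}. The second, third and fourth coordinates vanish at all $s_k$. The fifth coordinate is $0$ at $s_1,s_3$ and $\mu^3$ at $s_2$. The first coordinate is bounded by $\frac{17}{10}$.

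For $\alpha^-$ the same holds. Indeed, by \eqref{e-nicegroupop},
\[
(-\lambda L,0,0,0,0)*(y_1,0,0,0,y_5)=(y_1-\lambda L,0,0,0,y_5),
\]
and the time reversal $t\mapsto a+b-t$ only permutes $A_1$ and $A_3$. For $\beta^\pm$ the roles of coordinates $1,2$ are swapped, and the fourth coordinate takes the values $0,-\mu^3,0$.

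By Proposition~\ref{constructiondistances}(\ref{enum-compact}) and the hypothesis, $\rho(s_k)$ and $\eta(s_k)$ lie in $\overline{B(0,2)}$. Then \eqref{distancecomparison} gives
\[
|\Gamma(s_k)-\rho(s_k)|=|\eta(s_k)-\rho(s_k)|<\tfrac{\mu^3}{10}\quad\text{for each }k.
\]
So every coordinate of $\Gamma(s_k)$ is within $\mu^3/10$ of the corresponding coordinate of $\rho(s_k)$. This is the only place the closeness of $\eta$ to $\rho$ is used.

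Now I treat $\rho=\alpha^\pm$; the case $\beta^\pm$ is symmetric. Since $\Gamma'$ is continuous, whichever of $|\Gamma_1'|\ge\frac12$ or $|\Gamma_2'|\ge\frac12$ holds comes with a fixed sign of $\Gamma_1'$ or $\Gamma_2'$ on $[a,b]$.

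\emph{Case $|\Gamma_2'|\ge\frac12$.} Then $\Gamma_2$ is strictly monotone, so
\[
|\Gamma_2(s_3)-\Gamma_2(s_1)|\ge\tfrac12(s_3-s_1)\ge\tfrac L{10}.
\]
On the other hand, both values are within $\mu^3/10$ of $0$. Since $\mu<\frac{L}{40}$ and $L\le1$, we have $\frac{2\mu^3}{10}<\frac L{10}$, a contradiction.

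\emph{Case $\Gamma_1'\ge\frac12$.} Apply Lemma~\ref{unreachable}(1) on $[s_1,s_2]$. The left side is
\[
\Gamma_5(s_2)-\Gamma_5(s_1)\ge \mu^3-\tfrac{2\mu^3}{10}.
\]
The right side is at most $|\Gamma_1(s_2)|\,\Gamma_2(s_2)^2+|\Gamma_1(s_1)|\,\Gamma_2(s_1)^2\lesssim(\mu^3/10)^2$, because $|\Gamma_1(s_k)|\le 2$ and $|\Gamma_2(s_k)|<\mu^3/10$. This contradicts $\mu$ being small.

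\emph{Case $\Gamma_1'\le-\frac12$.} Apply Lemma~\ref{unreachable}(2) on $[s_2,s_3]$. There $\Gamma_5$ must drop by about $\mu^3$, while the lower bound given by the lemma is again of order $\mu^6$. This is a contradiction.

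For $\beta^\pm$, the case $|\Gamma_1'|\ge\frac12$ is handled by monotonicity of $\Gamma_1$ exactly as above. In the case $|\Gamma_2'|\ge\frac12$, Lemma~\ref{unreachable}(3) or (4) forces $\Gamma_4$ to be monotone. However, $\Gamma_4$ is within $\mu^3/10$ of $0$ at $s_1$, of $-\mu^3$ at $s_2$, and of $0$ at $s_3$. Since $\frac{2\mu^3}{10}<\mu^3$, it cannot be monotone, giving a contradiction.

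The main obstacle is bookkeeping rather than any single estimate. I need the precise coordinates of $\rho$ on the long intervals, which come from the commutation \eqref{e-nicegroupop}, including for the reversed and translated curves $\alpha^-,\beta^-$. I also need the Euclidean comparison to be legitimate, which requires the relevant points to lie in $\overline{B(0,2)}$. Once the three coincidence times exist, each contradiction is a one-line inequality.
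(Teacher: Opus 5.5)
Your proof is correct and follows the paper's argument. You take coincidence points in the long intervals (each of measure at least $L/5$), pass to Euclidean closeness via $\kappa$, and play Lemma~\ref{unreachable} (or monotonicity of $\Gamma_4$) against the $\mu^3$ jump in the fifth (resp.\ fourth) coordinate. The differences are organizational only: in the transverse case you use monotonicity of $\Gamma_2$ (resp.\ $\Gamma_1$) between $s_1\in A_1$ and $s_3\in A_3$, where the paper uses a global bound $|\eta_2|<\lambda(b-a)/(2Q)$ with a sublevel-set measure estimate, and you compute the coordinates of $\alpha^-,\beta^-$ directly rather than reducing to $\alpha^+,\beta^+$ by reflection.
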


\begin{proof}
Let $\epsilon := \left(\frac{\lambda (b-a)}{24Q}\right)^{3} < 1$. 
Since $\kappa\geq 1$ it follows that $d_{c}(\eta(t),\rho(t))<1/10$. By combining this with (1) in Proposition~\ref{constructiondistances} we note that for $t\in [a,b]$
\begin{equation}\label{etaintheball}
d_{c}(\eta(t),0)\leq d_{c}(\eta(t),\rho(t))+d_{c}(\rho(t),0)\leq \tfrac{1}{10}+\tfrac{17}{10}<2.
\end{equation}
This together with \eqref{distancecomparison} implies
\begin{equation}
    \label{e-etacloseEuclidean}
    |\eta(t)-\rho(t)| \leq \kappa d_c(\eta(t),\rho(t)) < \tfrac{\varepsilon}{10}
    \quad
    \text{for all } t \in [a,b].
\end{equation}

We will first prove the proposition for the curve $\rho=\alpha^+$. 
Suppose that $\Gamma'_1 \geq \tfrac12$ on $[a,b]$.
If either $\Gamma(t)\neq \eta(t)$ for all $t\in I_{1}\cup \cdots \cup I_{Q}$
or
$\Gamma(t)\neq \eta(t)$ for all  $t\in I_{Q+2}\cup \cdots \cup I_{2Q+1}$, then by \eqref{Qintervals}
\begin{equation}
    \label{e-smallMeasure}
m\{t\in [a,b]:\Gamma(t)\neq \eta(t)\} \geq 
\tfrac15 (b-a)
\end{equation}
and the claim is proven.

Assume instead that $\Gamma(s)= \eta(s)$ and $\Gamma(t)= \eta(t)$
for some $s \in I_{1}\cup \cdots \cup I_{Q}$ and $t\in I_{Q+2}\cup \cdots \cup I_{2Q+1}$. 
We will now see that this assumption leads to a contradiction.
We first observe from 
\eqref{e-etacloseEuclidean} and the facts that
$\alpha^+_5(s)=0$ and $\alpha^+_5(t)=  \epsilon$ that
\begin{align*}
    \eta_5(t)-\eta_5(s)
    &=
    \alpha^+_5(t) + \eta_5(t) - \alpha^+_5(t) + \alpha^+_5(s) -\eta_5(s)\\
    &\geq 
    \alpha^+_5(t) - |\eta_5(t)-\alpha^+_5(t)| - |\eta_5(s)-\alpha^+_5(s)|
    >
    \epsilon
    -
    2(\tfrac{\epsilon}{10})
    =
    \tfrac{4 \epsilon}{5}.
\end{align*}
On the other hand, Lemma~\ref{unreachable}(1) gives
\begin{equation}
    \label{e-Lemma34applied1}
    \begin{aligned}
\eta_5(t)-\eta_5(s)
    = \Gamma_5(t) -\Gamma_5(s)
    &\leq \tfrac12(\Gamma_1(t)\Gamma_2(t)^2
    -
    \Gamma_1(s)\Gamma_2(s)^2)\\
        &\leq
    \tfrac12|\eta_1(t)|\eta_2(t)^2
    +
    \tfrac12|\eta_1(s)|\eta_2(s)^2.
\end{aligned}
\end{equation}
Since $\alpha^+_2(t)=\alpha^+_2(s)=0$,
\eqref{e-etacloseEuclidean} implies that
$|\eta_2(t)| < \tfrac{\varepsilon}{10}$
and
$|\eta_2(s)| < \tfrac{\varepsilon}{10}$.
Note also that
$$
|\eta_1(t)|
\leq 
|\eta_1(t) - \alpha^+_1(t)| + |\alpha^+_1(t)|
\leq \tfrac{\epsilon}{10} + \tfrac{17}{10} < 2
$$
since 
$|\alpha_1^+(t)| \leq \int_0^t |(\alpha^+)'| \leq \lambda' (b-a) < \tfrac{17}{10}$.
Similarly we have $|\eta_1(s)|<2$. 
Therefore, 
\begin{align*}
    \tfrac{4\epsilon}{5}
    <\eta_5(t)-\eta_5(s)
    \leq
    \tfrac12|\eta_1(t)|\eta_2(t)^2
    +
    \tfrac12|\eta_1(s)|\eta_2(s)^2
    <
    2(\tfrac{\epsilon}{10})^2< \tfrac{\epsilon}{50}.
\end{align*}
This is impossible, so \eqref{e-smallMeasure} holds.

Now, if $\Gamma_1' \leq -\tfrac12$, the argument is similar, and it suffices to show that assuming
$\Gamma(s)= \eta(s)$ and $\Gamma(t)= \eta(t)$
for some $s \in I_{Q+2}\cup \cdots \cup I_{2Q+1}$ and $t\in I_{2Q+3}\cup \cdots \cup I_{3Q+2}$ leads to a contradiction.
This time, $\alpha^+_5(s)=\epsilon$ and $\alpha^+_5(t)=  0$, so
\begin{equation*}
    \eta_5(s)-\eta_5(t)
    \geq 
    \alpha^+_5(s) - |\eta_5(s)-\alpha^+_5(s)| - |\eta_5(t)-\alpha^+_5(t)|
    >
    \tfrac{4 \epsilon}{5}
\end{equation*}
while (2) from Lemma~\ref{unreachable} implies
\begin{align*}
\tfrac{4\varepsilon}{5} < \eta_5(s)-\eta_5(t)
    = -(\Gamma_5(t) -\Gamma_5(s))
    &\leq  -\tfrac12(\Gamma_1(t)\Gamma_2(t)^2
    -
    \Gamma_1(s)\Gamma_2(s)^2)\\
        &\leq
    \tfrac12|\eta_1(t)|\eta_2(t)^2
    +
    \tfrac12|\eta_1(s)|\eta_2(s)^2
    <
    \tfrac{\varepsilon}{50},
\end{align*}
which is impossible.

Suppose now that $|\Gamma'_2| \geq \tfrac12$ on $[a,b]$.
According to the definition of $\alpha^+$, its second coordinate $\alpha_2^+$ is non-zero only on $I_{Q+1}$ and $I_{2Q+2}$.
Moreover, $\alpha_2^+=0$ at the endpoints of these segments, and $|(\alpha^+)'| \leq \lambda'$. Since these intervals are disjoint, it follows that 
$|\alpha_2^+| \leq \lambda' \tfrac{b-a}{3Q+2} = \lambda \tfrac{b-a}{3Q}$ on $[a,b]$.
Combining with \eqref{e-etacloseEuclidean},
this implies that $|\eta_{2}| < \tfrac{\varepsilon}{10} + \lambda \tfrac{b-a}{3Q} < \lambda \tfrac{b-a}{2Q}$ on $[a,b]$. Hence 
\[m\{t\in [a,b]:\Gamma(t)=\eta(t)\} \leq 
m\left\{t\in [a,b]: |\Gamma_{2}(t)| \leq \lambda \tfrac{b-a}{2Q}\right\}.\]
If $\Gamma_2'\geq \tfrac12$ on $[a,b]$, then $\Gamma_{2}(t)-\Gamma_{2}(s)=\int_{s}^{t}\Gamma_{2}'\geq \tfrac12 (t-s)$ 
and if $\Gamma_2'\leq -\tfrac12$ then 
$\Gamma_{2}(s)-\Gamma_{2}(t)=-\int_{s}^{t}\Gamma_{2}'\geq \tfrac12 (t-s)$ for all $t\geq s$.
Thus $|t-s|\leq 2|\Gamma_{2}(t)-\Gamma_{2}(s)|$ for all $s,t \in [a,b]$. Hence if $|\Gamma_{2}(t)| \leq \lambda \tfrac{b-a}{2Q}$ and $|\Gamma_{2}(s)| \leq \lambda \tfrac{b-a}{2Q}$ for some $s,t \in [a,b]$, then $|t-s|\leq 2\lambda \frac{b-a}{Q}$. 
Applying the assumptions $Q\geq 5$ and $\lambda<3/2$ gives
\[ m \left\{t\in [a,b]:|\Gamma_{2}(t)| \leq \lambda \tfrac{b-a}{2Q}\right\} \leq 2\lambda \tfrac{b-a}{Q} < \tfrac35 (b-a).
\]
This completes the proof in the case $\rho=\alpha^{+}$.

Suppose $\rho=\alpha^-$. If $\Gamma\colon [a,b] \to \mathbb{F}$ is $C^1_H$, define for $t$ in $[a,b]$,
\[
\hat{\Gamma}(t) :=(\lambda L,0,0,0,0)*\Gamma(a+b-t).
\]
Notice that if $\Gamma$ satisfies the conditions in the statement of the proposition then so does $\hat{\Gamma}$. Further, if  $\Gamma(t) = \alpha^-(t)$ for some $t \in [a,b]$, then it follows from the definition of $\alpha^{-}$ that $\hat{\Gamma}(t)= \alpha^+(t)$. Hence this case follows from the case $\rho = \alpha^+$.

Now suppose that $\rho = \beta^+$. 
The proof in the case $|\Gamma_1'| \geq \tfrac12$ is nearly identical to that of the above case when $\rho = \alpha^+$ and $|\Gamma_2'| \geq \tfrac12$, so we leave the details to the reader.

The proof in the case $|\Gamma_2'| \geq \tfrac12$ is also similar to (and simpler than) that of the above case when $|\Gamma_1'| \geq \tfrac12$, and we outline the main differences here.
Suppose first that $\Gamma_2' \geq \tfrac12$.
As before, we need only verify that it is impossible for 
$\Gamma(s)= \eta(s)$ and $\Gamma(t)= \eta(t)$ for some $s \in I_{1}\cup \cdots \cup I_{Q}$  and $t\in I_{Q+2}\cup \cdots \cup I_{2Q+1}$ simultaneously.

Since, in this case, $\beta^+_4(s)=0$ and $\beta^+_4(t) = -\epsilon$, we have
\begin{align*}
    \eta_4(t)-\eta_4(s)
    \leq 
    \beta^+_4(t) + |\eta_4(t)-\beta^+_4(t)| + |\beta^+_4(s)-\eta_4(s)| 
    <
    -\epsilon
    +
    2(\tfrac{\epsilon}{10})
    =
    -\tfrac{4 \epsilon}{5}.
\end{align*}
On the other hand, applying (3) from Lemma~\ref{unreachable} replaces \eqref{e-Lemma34applied1} with 
\begin{align*}
    \eta_4(t)-\eta_4(s) = \Gamma_{4}(t)-\Gamma_{4}(s) \geq 0,
\end{align*}
and we have already reached a contradiction.

When $\Gamma_2' \leq -\tfrac12$, the proof follows similarly to the above argument as it did in the case $\rho = \alpha^+$.
Since the case $\rho = \beta^-$ can be concluded from the previous case as above, this completes the proof.
\end{proof}





\subsection{A Sequence of Horizontal Curves}\label{sequenceofcurves}

We are now ready to construct the sequence of horizontal curves $\gamma_{n}\colon [0,1]\to \mathbb{F}$. We first give the underlying idea. We start with the horizontal segment $\gamma_{1}(t):=\exp(tX_{1})$. At each subsequent step, $\gamma_{n}$ will be a horizontal curve defined on $[0,1]$ with the property that, when $[0,1]$ is subdivided into an appropriate number $N_{n}$
of subintervals, the restriction of $\gamma_n$ to each such subinterval is a horizontal segment with direction equal to one of $\pm X_{1}, \pm X_{2}$ with speed $\lambda_{n}$. To construct $\gamma_{n+1}$ from $\gamma_{n}$, we will apply Construction~\ref{c-construction} to modify these horizontal segments with parameters $\lambda=\lambda_{n}$ and $Q=5^{n}$. 


We now give the details of the construction.
Set $\lambda_1 = 1$ and $\lambda_{n+1}=(1+\frac{2}{3\cdot 5^{n}})\lambda_{n}$ for $n\geq 1$. This yields $\lambda_n = \prod_{k=1}^{n-1} \left(1+\frac{2}{3\cdot 5^{k}}\right)$ for $n \geq 2$. Since $1+x \leq e^x$ when $x>0$, we have
\begin{equation}\label{lambdabound}
1 \leq \lambda_n 
\leq
\exp\left( \sum_{k=1}^{n-1} \frac{2}{3\cdot 5^{k}} \right)
< 
e^{1/6} < \tfrac32
\end{equation}
for all $n\geq 2$.
Recall the constant $\kappa$ chosen in \eqref{distancecomparison}.
Set $N_1 = 1$ and, for $n\geq 1$,
write 
$$
N_{n +1}
= 
80\kappa(3\cdot 5^{n}+2)(24 \cdot 5^nN_n)^3.
$$
Note in particular that $\{N_n\}_{n\in\mathbb{N}}$ is an increasing sequence and that $\tfrac{N_{n+1}}{N_n}$ is a multiple of $8(3\cdot 5^{n}+2)$. Moreover, since $\lambda_{n}\geq 1$,
\begin{equation}
    \label{e-sillyNnbound}
    N_{n+1}^{-1} \leq \frac{1}{10\kappa}\left( \frac{\lambda_{n}}{24 \cdot 5^nN_n }\right)^{3}.
\end{equation}

For each $n \geq 1$, let $\{J^n_j\}_{j=1}^{N_n}$ be a partition of $[0,1]$ into $N_n$ consecutive intervals of equal length. Note that, for each $j \in \{1,\dots,N_n\}$, the interval $J_j^n$ is equal to the union of $\tfrac{N_{n+1}}{N_n}$ intervals of the form $J_k^{n+1}$ for consecutive indices $k \in \{1,\dots,N_{n+1}\}$.
Conversely, if $n\geq 2$, then $J_j^n$ is one of the intervals obtained when $J_i^{n-1}$ is partitioned into $\tfrac{N_{n}}{N_{n-1}}$ intervals of equal length for some $i \in \{1,\dots,N_{n-1}\}$.

\begin{construction}
    \label{c-inductivedefinition}
We now inductively define horizontal curves $\gamma_n\colon [0,1] \to \mathbb{F}$ for every $n\geq 1$ that have the following property (among others to be studied later):
\begin{equation}\label{inductiveproperty}
\gamma_n|_{J^n_j} 
\mbox{ is a horizontal }\lambda_n \mbox{-segment}
\mbox{ for }
j \in \{1,\dots,N_n\}.
\end{equation}

The first curve $\gamma_{1}\colon [0,1]\to \mathbb{F}$ is defined as $\gamma_{1}(t):=\exp(tX_{1})$. Recalling $N_{1}=1$ and $\lambda_{1}=1$, clearly \eqref{inductiveproperty} holds for $n=1$.

For the inductive step, fix $n\geq 1$ for which a horizontal curve $\gamma_{n}\colon [0,1] \to \mathbb{F}$ has been defined such that \eqref{inductiveproperty} holds. Temporarily fix 
$j \in \{1,\dots,N_n\}$
and write $[a,b]:=J^n_j$. Then \eqref{inductiveproperty} implies that $\gamma_{n}(t) = \gamma_{n}(a) *\exp( \lambda_n (t-a) V)$ for 
all $t \in [a,b]$ and
some $V\in \{\pm X_{1}, \pm X_{2}\}$. We will now ``modify'' this curve as discussed above
based on the choice of $V$: 
apply Construction~\ref{c-construction} with parameters $[a,b]=J_j^n$, $\lambda=\lambda_{n}$, and $Q=5^{n}$
to 
define a curve $\rho \colon [a,b]\to \mathbb{F}$
such that 
$\rho=\alpha^\pm$ if $V = \pm X_1$ or $\rho=\beta^\pm$ if $V = \pm X_{2}$.
In particular, $\rho$ is a horizontal curve with the same endpoints as $\exp( \lambda_n (t-a)V)$ by (\ref{enum-easy1}) and (\ref{enum-easy2}) of Proposition~\ref{p-distances}. We then define 
\begin{equation}
    \label{e-nextstepwitheta}
    \gamma_{n+1}(t)=\gamma_{n}(a)*\rho(t) \quad \text{ for } t\in [a,b] = J_j^n.
\end{equation}
Repeating this process for all $j \in \{1,\dots,N_n\}$
defines $\gamma_{n+1}$ on $[0,1]$. Since each application of Construction \ref{c-construction} preserves the endpoints of the curve, the map $\gamma_{n+1}$ is continuous,
and it is horizontal since $\gamma_{n+1}|_{J_j^n}$ is horizontal for each $j$.

We now check that $\gamma_{n+1}$ satisfies \eqref{inductiveproperty}. 
Fix $j \in \{1,\dots,N_{n+1}\}$.
Then $J_{j}^{n+1} \subset J_{j_0}^n$ for some $j_0 \in \{1,\dots,N_{n}\}$,
and $J_{j}^{n+1}$ is one of the intervals obtained when $J_{j_0}^n$ is partitioned into $\tfrac{N_{n+1}}{N_n}$ consecutive intervals of equal length.
Since $\tfrac{N_{n+1}}{N_n}$ is a multiple of $8(3\cdot 5^{n}+2)$,
property~(\ref{i-smallpartition}) in Proposition~\ref{p-distances} with $\lambda' = \left(1+\frac{2}{3\cdot 5^{n}}\right)\lambda_{n} = \lambda_{n+1}$
implies that $\gamma_{n}(a)^{-1}*\gamma_{n+1}|_{J_j^{n+1}}$ is a 
horizontal $\lambda_{n+1}$-segment,
and thus so is $\gamma_{n+1}|_{J_j^{n+1}}$.
\end{construction}

The following are other properties of the curves $\gamma_{n}$ that arise as a consequence of Construction~\ref{c-inductivedefinition}.

\begin{proposition}
    \label{p-inductiveconstruction}
    For every $n \in \mathbb{N}$ the following properties hold:
    \begin{enumerate}
        \item \label{item-segments1} For every $j \in \{1,\dots,N_n\}$, the curve $(\gamma_n^{-1}(a)*\gamma_{n+1})|_{J_j^n}$ is one of the curves $\alpha^+$, $\alpha^-$, $\beta^+$, or $\beta^-$ from Construction~\ref{c-construction} with parameters $[a,b]=J_j^n$, $\lambda=\lambda_{n}$, and $Q=5^{n}$.
        \item \label{item-segments2} For every $j \in \{1,\dots,N_n\}$, there is some $V\in \{\pm X_{1}, \pm X_{2}\}$ such that $\gamma_n|_{J_j^n}$ is a horizontal $\lambda_{n}V$-segment. Moreover, suppose $\{I_{j,i}^n\}_{i=1}^{3 \cdot 5^{n}+2}$ is a partition of $J_j^{n}$ into $3 \cdot 5^{n}+2$ consecutive intervals of equal length. 
        Then, for each $1\leq i\leq 3 \cdot 5^{n}+2$ with $i\neq 5^{n}+1$ and $i\neq 2\cdot 5^{n}+2$, the curve $\gamma_{n+1}$ restricted to $I_{j,i}^n$ is a horizontal $\lambda_{n+1}V$-segment (for the same choice of $V$).
\item \label{item-Lip} $\gamma_n$ is $\lambda_n$-Lipschitz.
\item \label{item-cauchy} For all $t \in [0,1]$,
    \begin{equation}
    \label{e-shortsteptonext}
    d_{c}(\gamma_{n}(t),\gamma_{n+1}(t))
    \leq
    2N_{n}^{-1}5^{-n}.
\end{equation}
\end{enumerate}
\end{proposition}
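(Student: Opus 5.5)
The plan is to verify the four items essentially in order, since each is a bookkeeping consequence of Construction~\ref{c-inductivedefinition} together with Propositions~\ref{p-distances} and \ref{constructiondistances}.

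For (\ref{item-segments1}), fix $j$ and set $[a,b]=J_j^n$. By \eqref{e-nextstepwitheta}, $\gamma_{n+1}(t)=\gamma_n(a)*\rho(t)$ on $[a,b]$. Here $\rho\in\{\alpha^\pm,\beta^\pm\}$ is built with $\lambda=\lambda_n$ and $Q=5^n$, and the hypotheses of Construction~\ref{c-construction} hold: $L=1/N_n\le 1$, $Q=5^n\ge 5$, and $1\le\lambda_n<\tfrac32$ by \eqref{lambdabound}. Left-multiplying by $\gamma_n(a)^{-1}$ then gives item (\ref{item-segments1}), reading the statement's $\gamma_n^{-1}(a)$ as $\gamma_n(a)^{-1}$.

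For (\ref{item-segments2}), the first claim is \eqref{inductiveproperty}, and $V$ is the direction used to select $\rho$. The intervals $I^n_{j,i}$ are exactly the intervals $I_i$ of Construction~\ref{c-construction} for $[a,b]=J_j^n$. By Proposition~\ref{p-distances}(\ref{enum-derivative}), for $i\ne Q+1,2Q+2$ the restriction $\rho|_{I_i}$ is a horizontal $\lambda' V$-segment with $\lambda'=\lambda_{n+1}$. Left translation by $\gamma_n(a)$ preserves the form $\eta(t)=\eta(a_i)*\exp(\lambda'(t-a_i)V)$, so the same holds for $\gamma_{n+1}$.

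For (\ref{item-Lip}), note that by \eqref{inductiveproperty} $\gamma_n$ is a continuous concatenation of horizontal $\lambda_n$-segments. By \eqref{e-nicederivativeform}, $|\gamma_n'|_H=\lambda_n$ a.e. For $s<t$, the restriction $\gamma_n|_{[s,t]}$ is a horizontal curve joining $\gamma_n(s)$ to $\gamma_n(t)$, so $d_c(\gamma_n(s),\gamma_n(t))\le \ell(\gamma_n|_{[s,t]})=\lambda_n(t-s)$, exactly as in Lemma~\ref{l-C1HisLip}.

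For (\ref{item-cauchy}), fix $t\in J_j^n=[a,b]$. Since $\gamma_n(t)=\gamma_n(a)*\exp(\lambda_n(t-a)V)$ and $\gamma_{n+1}(t)=\gamma_n(a)*\rho(t)$, left invariance of $d_c$ gives
\[
d_c(\gamma_n(t),\gamma_{n+1}(t))=d_c\bigl(\exp(\lambda_n(t-a)V),\rho(t)\bigr).
\]
By Proposition~\ref{constructiondistances}(\ref{enum-euclideanbound}), this is at most $2(b-a)/Q=2N_n^{-1}5^{-n}$.

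The only point that needs care is matching $V$ with $\pm X_1$ or $\pm X_2$ consistently with the choice of $\rho$ and the sign conventions of $\alpha^-$ and $\beta^-$. This is immediate from the definitions in Construction~\ref{c-inductivedefinition}, so I expect no genuine obstacle.
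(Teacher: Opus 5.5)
Your proposal is correct and follows the paper's proof essentially item by item. Items (1) and (2) come from \eqref{e-nextstepwitheta} and Proposition~\ref{p-distances}, and item (4) from left invariance together with Proposition~\ref{constructiondistances}. The only cosmetic difference is in (3): you bound $d_c(\gamma_n(s),\gamma_n(t))$ by the length $\lambda_n(t-s)$ of the horizontal curve $\gamma_n|_{[s,t]}$, whereas the paper sums the exact segment distances from Remark~\ref{rem-distance} with the triangle inequality, which amounts to the same estimate.
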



\begin{proof}
Property (\ref{item-segments1}) follows immediately from the definition of $\gamma_{n+1}$ in Construction \ref{c-inductivedefinition}, in particular by \eqref{e-nextstepwitheta}. Property (\ref{item-segments2}) follows similarly from Proposition~\ref{p-distances}(\ref{enum-derivative}).

To prove (\ref{item-Lip}), fix $x,y \in [0,1]$ with $x \leq y$. Then $x \in J_{j_1}^{n}$ and $y \in J_{j_2}^{n}$ for some $j_1,j_2 \in \{1,\dots,N_{n}\}$ with $j_{1}\leq j_{2}$. Write $[a_i,b_i] := J_i^{n}$ for each $i$ between $j_1$ and $j_2$.
According to \eqref{inductiveproperty} and Remark~\ref{rem-distance} and using the fact $a_{i+1}=b_{i}$, we then have
\begin{align*}
    d_c(\gamma_{n}(x),\gamma_{n}(y)) &\leq d_c(\gamma_{n}(x),\gamma_{n}(b_{j_1}))+\sum_{i = j_1+1}^{j_2-1} d_c(\gamma_{n}(a_i),\gamma_{n}(b_i))\\ & \hspace{2in} + d_c(\gamma_{n}(a_{j_2}),\gamma_{n}(y))\\
    &= \lambda_{n}(b_{j_1}-x) + \sum_{i = j_1+1}^{j_2-1} \lambda_{n}(b_i-a_i) + \lambda_{n}(y-a_{j_2})\\
    &=\lambda_{n}(y-x).
\end{align*}

We finally verify property (\ref{item-cauchy}) separately on each interval $J_j^n$ for $1\leq j\leq N_{n}$. Fix such a $j$, write $[a,b] := J_j^n$, and fix $t \in [a,b]$. 
From \eqref{inductiveproperty}, we can write $\gamma_{n}(t)= \gamma_{n}(a)*\exp(\lambda_{n}(t-a)V)$ on $[a,b]$ for some $V\in \{\pm X_{1}, \pm X_{2}\}$. 
According to \eqref{e-nextstepwitheta}, $\gamma_{n+1}(t)=\gamma_{n}(a)*\rho(t)$, where $\rho$ is $\alpha^{\pm}$ or $\beta^{\pm}$ depending on whether $V=\pm X_{1}$ or $\pm X_{2}$ respectively. Since $J_j^n$ has length $N_{n}^{-1}$, we use Proposition~\ref{constructiondistances}(\ref{enum-euclideanbound}) to estimate as follows:  
\begin{align*}
d_{c}(\gamma_{n}(t),\gamma_{n+1}(t)) &= d_{c}( \exp(\lambda_{n}(t-a)V), \rho(t)  )
\leq \tfrac{2(b-a)}{5^{n}}
=2N_{n}^{-1}5^{-n}.
\end{align*}
This proves (\ref{item-cauchy}).
\end{proof}

\subsection{The Limit Curve}\label{limitcurve}

In this section we show that the sequence of curves $\gamma_{n}$ defined in Construction \ref{c-inductivedefinition} converges uniformly to a limiting curve $\gamma$ that does not overlap with any $C^{1}_H$ curve on a set of positive measure, hence proving Theorem \ref{lusinzero}.

First, recall that $\{\lambda_{n}\}_{n=1}^\infty$ is an increasing sequence and, by \eqref{lambdabound}, it takes values in $[1,\tfrac32)$. Hence we can define \[\lambda_0 := \lim_{n\to \infty}\lambda_{n} \in [1,\tfrac32].\]

\begin{proposition}\label{limitislipschitz}
The sequence of Lipschitz curves $\{\gamma_{n}\}_{n=1}^\infty$ converges uniformly in $\mathbb{F}$ to a $\lambda_0$-Lipschitz 
curve $\gamma\colon [0,1]\to \mathbb{F}$. In particular, we have for all $n \geq 1$ 
and $t \in [0,1]$ that
\begin{equation}
\label{e-unifconv}
d_{c} (\gamma(t),\gamma_{n}(t)) \leq 
2\sum_{k=n}^{\infty} N_{k}^{-1} 5^{-k}
\end{equation}
\end{proposition}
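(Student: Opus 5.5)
We will show that $\{\gamma_n\}$ is uniformly Cauchy in $(\mathbb{F},d_c)$ using Proposition~\ref{p-inductiveconstruction}(\ref{item-cauchy}). Then we will use completeness of $\mathbb{F}$ to get the limit, and pass the Lipschitz bounds of Proposition~\ref{p-inductiveconstruction}(\ref{item-Lip}) to that limit.

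\emph{Summability.} For $m>n$ and $t\in[0,1]$, the triangle inequality and \eqref{e-shortsteptonext} give
\[
d_c(\gamma_n(t),\gamma_m(t))\le \sum_{k=n}^{m-1} d_c(\gamma_k(t),\gamma_{k+1}(t)) \le 2\sum_{k=n}^{m-1}N_k^{-1}5^{-k}.
\]
Since $N_k\ge 1$, the series $\sum_k N_k^{-1}5^{-k}$ is dominated by the geometric series $\sum_k 5^{-k}$, so it converges. Its tails tend to zero uniformly in $t$, and hence $\{\gamma_n\}$ is uniformly Cauchy.

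\emph{Completeness.} We need $(\mathbb{F},d_c)$ to be complete. By Proposition~\ref{p-inductiveconstruction}(\ref{item-Lip}) and \eqref{lambdabound}, every $\gamma_n(t)$ satisfies $d_c(\gamma_n(t),0)\le \lambda_n<\tfrac32$, so all the curves stay in $\overline{B(0,2)}$. On this compact set, \eqref{disugdist} gives $c_1|x-y|\le d_c(x,y)\le c_2|x-y|^{1/3}$. The first inequality shows that a $d_c$-Cauchy sequence is Euclidean Cauchy, so it converges in Euclidean distance to some point of the closed ball. The second inequality then shows that the convergence also holds in $d_c$. (Alternatively, one can quote that Carnot groups with the CC metric are complete and boundedly compact.) Define $\gamma(t):=\lim_n\gamma_n(t)$. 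Letting $m\to\infty$ in the display above, using continuity of $d_c$, gives \eqref{e-unifconv}. In particular the convergence is uniform.

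\emph{Lipschitz constant.} For $x,y\in[0,1]$,
\[
d_c(\gamma(x),\gamma(y))=\lim_n d_c(\gamma_n(x),\gamma_n(y))\le \lim_n\lambda_n|x-y| = \lambda_0|x-y|.
\]
Here we use Proposition~\ref{p-inductiveconstruction}(\ref{item-Lip}) and $\lambda_n\to\lambda_0$. This gives the claim.

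The only step that is not pure bookkeeping is justifying that the pointwise limit exists in $\mathbb{F}$ with respect to $d_c$, that is, completeness. I would handle it as described, via \eqref{disugdist} on $\overline{B(0,2)}$, which is exactly the ball the curves are confined to. Everything else is a direct telescoping estimate.
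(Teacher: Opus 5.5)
Your proof is correct and follows the paper's argument essentially step for step: you telescope \eqref{e-shortsteptonext} to get a uniform Cauchy bound, invoke completeness of $(\mathbb{F},d_c)$ to define $\gamma$ pointwise, let $m\to\infty$ to obtain \eqref{e-unifconv}, and pass the $\lambda_n$-Lipschitz bounds to the limit. The only difference is that you justify completeness explicitly via \eqref{disugdist} on $\overline{B(0,2)}$, which tacitly uses $\gamma_n(0)=0$ (immediate from the construction), whereas the paper simply takes completeness of $\mathbb{F}$ as known.
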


\begin{proof}
Using \eqref{e-shortsteptonext}, we have for all $m>n$
\begin{equation}\label{Cauchy}
d_{c} (\gamma_{m}(t),\gamma_{n}(t)) \leq 2\sum_{k=n}^{m-1} N_{k}^{-1}5^{-k} \mbox{ for all }t\in [0,1].
\end{equation}
Since $N_k \geq 1$ for all $k\geq 1$, the series $\sum_{k=1}^{\infty} N_{k}^{-1}5^{-k}$ converges. Thus, for all fixed $t\in [0,1]$, the sequence $\{\gamma_{n}(t)\}_{n=1}^\infty$ is a Cauchy sequence in the complete metric space $\mathbb{F}$ 
and hence converges to some point in $\mathbb{F}$ that we define to be $\gamma(t)$. Passing $m\to \infty$ in \eqref{Cauchy} then gives \eqref{e-unifconv}
for all $t \in [0,1]$.
Hence $\gamma_{n}$ converges uniformly to a map $\gamma\colon [0,1]\to \mathbb{F}$ with respect to $d_{c}$.

To see that $\gamma$ is $\lambda_0$-Lipschitz, recall that each curve $\gamma_n$ is $\lambda_n$-Lipschitz. 
Hence, for all $x,y \in [0,1]$, we have
    $$
    d_{c}(\gamma(x),\gamma(y)) = \lim_{n\to\infty} d_{c}(\gamma_n(x),\gamma_n(y)) \leq \lim_{n \to \infty} \lambda_n |x-y| = \lambda_0 |x-y|.
    $$
\end{proof}

Since, by Proposition \ref{limitislipschitz}, the curve $\gamma$ is Lipschitz with respect to the metric $d_{c}$, it follows that it is also a horizontal curve 
(see Remark~\ref{rem-horiz}). 
In particular its derivative exists almost everywhere and is horizontal. 

\begin{proposition}\label{limitishorizontal}
Suppose $\gamma\colon [0,1]\to\mathbb{F}$ is the curve from Proposition~\ref{limitislipschitz}.
For almost every $x\in [0,1]$, we have $\gamma'(x)\in \{\pm \lambda_0 X_{1}(\gamma(x)), \pm \lambda_0 X_{2}(\gamma(x)) \}$.
\end{proposition}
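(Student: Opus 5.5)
Since $\gamma$ is horizontal (Remark~\ref{rem-horiz}), Lemma~\ref{lifting} shows that $\gamma'(x)$ is determined by $(\gamma_1'(x),\gamma_2'(x))$ and $\gamma(x)$. It therefore suffices to show that for almost every $x$ the planar velocity $(\gamma_1'(x),\gamma_2'(x))$ lies in $\{(\pm\lambda_0,0),(0,\pm\lambda_0)\}$. The plan is to track the planar projections $\pi\gamma_n=(\gamma_{n,1},\gamma_{n,2})$. By \eqref{e-groupoperation} these are ordinary Lipschitz curves in $\mathbb{R}^2$. By \eqref{inductiveproperty}, $\pi\gamma_n'$ is a.e.\ one of the four vectors $\pm\lambda_n e_1,\pm\lambda_n e_2$. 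Let $V_n(x)$ denote the direction of the segment of $\gamma_n$ containing $x$, and let $G_n$ be the union of the "long" subintervals $I^n_{j,i}$ with $i\neq 5^n+1,\,2\cdot5^n+2$. Proposition~\ref{p-inductiveconstruction}(\ref{item-segments2}) says that on $G_n$ the curve $\gamma_{n+1}$ moves in the same direction $V_n$, so $V_{n+1}=V_n$ on $G_n$. The complement of $G_n$ has measure $2/(3\cdot5^n+2)$ in each $J^n_j$, hence total measure at most $5^{-n}$. Since $\sum_n 5^{-n}<\infty$, Borel--Cantelli shows that a.e.\ $x$ lies in $G_n$ for all $n\ge n_0(x)$. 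Consequently $V_n(x)$ is eventually constant, equal to some $V(x)$.

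Next, I show $\pi\gamma'(x)=\lambda_0 V(x)$ at such $x$ where $\gamma$ is differentiable. Take $x$ in $\bigcap_{n\ge n_0}G_n$ that also avoids the countably many partition endpoints; this still has full measure. For $n\ge n_0$, let $K_n$ be the long block of $\gamma_{n+1}$ containing $x$, namely one of $I^n_{1}\cup\cdots\cup I^n_Q$, the middle block, or the last block inside $J^n_j$. On $K_n$, $\gamma_{n+1}$ is a single horizontal $\lambda_{n+1}V$-segment, so $\pi\gamma_{n+1}(y)-\pi\gamma_{n+1}(x)=\lambda_{n+1}(y-x)V$ for $y\in K_n$. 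The length of $K_n$ is at least $N_n^{-1}/5$, by \eqref{Qintervals}. This step is the main obstacle, since $x$ may lie near the end of $K_n$. To handle it, I would pick $y$ with $|y-x|=h_n$ in whichever direction stays inside $K_n$; since $x$ lies in $K_n$, one side always has room at least half its length. Then I would compare with $\gamma$ using \eqref{e-unifconv} together with \eqref{disugdist} on a compact ball. This gives
\[
|\pi\gamma(y)-\pi\gamma(x)-\lambda_{n+1}(y-x)V|\le 2\sup|\pi\gamma-\pi\gamma_{n+1}|\le C\sum_{k\ge n+1}N_k^{-1}5^{-k}\le C' N_{n+1}^{-1}.
\]

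I choose $h_n=N_n^{-1}/10$. Because $N_{n+1}\gg N_n$, the ratio $N_{n+1}^{-1}/h_n\to0$. Hence the difference quotients of $\pi\gamma$ at $x$ along a sequence $h_n\to 0$ (taken one-sided, from either side) converge to $\lambda_0V$. Since $\pi\gamma$ is Lipschitz, it is differentiable a.e., and at any point of differentiability every such sequence of difference quotients converges to the derivative. Therefore $\pi\gamma'(x)=\lambda_0V(x)$ for a.e.\ $x$. By Lemma~\ref{lifting}, $\gamma'(x)=\lambda_0 V(x)(\gamma(x))$, i.e.\ $\gamma'(x)\in\{\pm\lambda_0X_1(\gamma(x)),\pm\lambda_0X_2(\gamma(x))\}$.
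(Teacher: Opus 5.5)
Your proof is correct. Its first half matches the paper: Borel--Cantelli on the stair intervals shows that for a.e.\ $x$ the direction $V_n(x)$ is eventually constant. Where you differ is in passing to the limit.

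The paper notes that at good points $\gamma_k'(x)=\lambda_kV(\gamma_k(x))\to\lambda_0V(\gamma(x))=:g(x)$. It bounds $|\gamma_k'|$ uniformly via \eqref{distancecomparison} and applies dominated convergence to get $\gamma(y)-\gamma(x)=\int_{[x,y]\cap E} g$ in all five coordinates. From this it reads off $\gamma'=g$ a.e. That argument is purely qualitative: it uses uniform convergence but not its rate, and it does not need to know in advance that $\gamma$ is horizontal.

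You instead run a quantitative difference-quotient argument on the planar projection. The long block $K_n$ containing $x$ has length $\gtrsim N_n^{-1}$, while the uniform error from \eqref{e-unifconv} is $\lesssim N_{n+1}^{-1}$. The scale separation $N_{n+1}/N_n\to\infty$ therefore kills the error at step size $h_n\sim N_n^{-1}$. You then use Rademacher for $\pi\gamma$ and the horizontality of $\gamma$ (Remark~\ref{rem-horiz}, i.e.\ Pansu's theorem) to lift to $\mathbb{F}$.

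What your route buys is a transparent pointwise identification of the derivative, with no dominated convergence and no need to check measurability of $g$. What it costs is reliance on the specific growth of $N_n$ and on horizontality of the limit curve as an imported fact. You could drop the latter by running the same estimate in all five coordinates: on $K_n$ one has $\gamma_{n+1}(y)-\gamma_{n+1}(x)=\int_x^y\lambda_{n+1}V(\gamma_{n+1}(s))\,ds$, and $V(\cdot)$ is Lipschitz on $\overline{B(0,2)}$.
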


\begin{proof}
Recall that, for every $n\in \mathbb{N}$, the collection $\{J_j^n\}_{j=1}^{N_n}$ partitions $[0,1]$ into $N_n$ consecutive intervals of equal length. Suppose that for each $n$ and $j$ the collection $\{I_{j,i}^n\}_{i=1}^{3\cdot 5^{n}+2}$ is a partition of $J_j^n$ into $3\cdot 5^n+2$ consecutive intervals of equal length.
    Note that $m(I_{j,i}^n)=(N_n(3\cdot 5^{n}+2))^{-1}$ for each $i$, $j$, and $n$.

    Fix $n \in \mathbb{N}$, and set $F_{n} = \bigcup_{j =1}^{N_n} (I_{j,(5^{n}+1)}^n \cup I_{j,(2\cdot 5^n+2)}^n)$. According to Proposition~\ref{p-inductiveconstruction}(\ref{item-segments2}), $F_n$ consists of those subintervals of $[0,1]$ on which 
    $\gamma_{n+1}$ is not simply a horizontal $\lambda_{n+1}$-segment.
    Notice 
    $$
    m(F_n) \leq \sum_{j =1}^{N_n} m(I_{j,(5^n+1)}^n \cup I_{j,(2\cdot 5^n+2)}^n) = \frac{2N_n}{N_n(3\cdot 5^{n} + 2)} \leq 5^{-n}.
    $$
    Therefore $\sum_{n=1}^\infty m(F_{n})<\infty$, so, by the Borel-Cantelli lemma, $m(\bigcap_{n=1}^\infty \bigcup_{k\geq n}F_{k})=0$. In other words,
    if we write $E_{n}=\bigcap_{k\geq n}\left([0,1] \setminus F_k \right)$ then $m(\bigcup_{n=1}^{\infty} E_{n})=1$.
    Let $E$ denote the set of all points $x\in [0,1]$ such that
    \begin{itemize}
        \item $x \in \bigcup_{n=1}^\infty E_{n}$,
        \item $\gamma_n'(x)$ exists and satisfies the horizontality condition \eqref{lifteq} for all $n \in \mathbb{N}$,
        \item $x$ is not an endpoint of $J_{j}^{n}$ or $I_{j,i}^{n}$ for all $n \in \mathbb{N}$ and any
        $j \in \{1,\dots,N_n\}$
        and 
        $i \in \{1,\dots,3 \cdot 5^n + 2\}$.
    \end{itemize}
    Note that $m([0,1] \setminus E)=0$,
    so it suffices to verify that 
    $\gamma'(x)\in \{\pm \lambda_0 X_{1}(\gamma(x)), \pm \lambda_0 X_{2}(\gamma(x)) \}$ for all $x \in E$.
    Fix such an $x$ and choose $n$ such that $x \in E_n$.
    By definition, 
    $x \notin F_k$ for all $k \geq n$, and hence
    $x \notin I_{j,(5^k+1)}^k$ and  $x \notin I_{j,(2 \cdot 5^k+2)}^k$ for any $k \geq n$ and for all $j \in \{1,\dots,N_k\}$. 

\medskip

    \noindent \emph{Claim:} There exists some $V\in \{\pm X_{1}, \pm X_{2}\}$ such that, for every $k \geq n$, there is 
    some $j \in \{1,\dots,N_k\}$ such that $x$ lies in the interior of $J_j^k$ and
    $\gamma_{k}|_{J_j^k}$ is a 
    horizontal $\lambda_{k}V$-segment. 

    \begin{proof}[Proof of Claim]
    Choose $j \in \{1,\dots,N_n\}$ such that $x$ is in the interior of $J_{j}^{n}$.
    According to \eqref{inductiveproperty},
    there is some $V\in \{\pm X_{1}, \pm X_{2}\}$ such that
    $\gamma_{n}|_{J_{j}^{n}}$ is a horizontal $\lambda_{n}V$-segment. We prove the claim by induction.

    Suppose $k \geq n$ and $j \in \{1,\dots,N_k\}$ such that $x$ is in the interior of $J_{j}^{k}$ and 
    $\gamma_{k}|_{J_{j}^{k}}$ is a horizontal $\lambda_{k}V$-segment (for the same $V$ as above).
    Since $x \notin F_{k}$, there is some $i \in \{1,\dots,3 \cdot 5^{k}+2\}$ with $i \neq 5^{k}+1$ and $i \neq 2 \cdot 5^{k}+2$ such that
    $x$ is in the interior of $I_{j,i}^{{k}}\subset J_{j}^{{k}}$,
    and Proposition~\ref{p-inductiveconstruction}(\ref{item-segments2}) implies that $\gamma_{{k}+1}|_{I_{j,i}^{{k}}}$ is a horizontal $\lambda_{{k}+1}V$-segment. Since $3\cdot 5^{{k}}+2$ divides $N_{{k}+1}/N_{{k}}$, the interval $I_{j,i}^{{k}}$ can be written as a union of 
    consecutive intervals of the form $J_{\ell}^{{k}+1}$ for $\tfrac{N_{{k}+1}}{N_{{k}}(3 \cdot 5^{k} + 2)}$ consecutive indices $\ell \in \{1,\dots,N_{k+1} \}$. Therefore, $x$ is contained in the interior of $J_{\ell_0}^{{k}+1}$ for some $\ell_0 \in \{1,\dots,N_{{k}+1} \}$, and $\gamma_{{k}+1}|_{J_{\ell_0}^{{k}+1}}$ is a horizontal $\lambda_{{k}+1}V$-segment. The claim then follows inductively. 
    \end{proof}

According to the claim and \eqref{e-nicederivativeform},
we conclude that there is a fixed $V \in \{\pm X_1,\pm X_2\}$ independent of $k$
such that $\gamma_k'(x) = \lambda_kV(\gamma_k(x))$ for every $k \geq n$.
Thus, we can define a map $g\colon [0,1] \to \mathbb{R}^5$ such that,
for all $x \in E$,
\[
g(x)=\lim_{k\to\infty} \gamma_k'(x) = 
\lim_{k\to\infty} \lambda_k V(\gamma_k(x))
=
\lambda_0V(\gamma(x))
\]
and $g|_{[0,1] \setminus E} \equiv 0$.
Since $\gamma_{k}$ is $\lambda_{k}$-Lipschitz, for all $k \in \mathbb{N}$, 
we have that 
$\text{diam}(\gamma_k([0,1])) \leq \lambda_k < 2$.
Thus \eqref{distancecomparison} implies that 
there is a uniform bound on $|\gamma_k'|_E|$.
It then follows from the dominated convergence theorem that, for all $x,y \in E$ with $x \leq y$,
\begin{align*}
    \gamma(y) - \gamma(x) = \lim_{n \to \infty} (\gamma_n(y)-\gamma_n(x)) = \lim_{n \to \infty}\int_{[x,y] \cap E} \gamma_{n}'
    =
    \int_{[x,y] \cap E} g
\end{align*}
since $E$ is a set of full measure in $[0,1]$.
Therefore, for almost every $x \in E$, 
and hence almost every $x \in [0,1]$,
$\gamma'(x) =g(x)=\lambda_0 V(\gamma(x))$ for some $V \in \{\pm X_1,\pm X_2\}$. 
\end{proof}

With our curve $\gamma$ in hand, we are finally ready to prove the main result of the paper.

\begin{theorem*}[Restatement of Theorem \ref{lusinzero}]
There is a Lipschitz curve $\gamma\colon [0,1]\to \mathbb{F}$ such that, for every $C^{1}_H$ curve $\Gamma\colon [0,1]\to \mathbb{F}$, we have
\[
m\{t\in [0,1]:\Gamma(t)=\gamma(t)\}=0.
\]
\end{theorem*}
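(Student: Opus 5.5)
We take $\gamma$ to be the limit curve of Proposition~\ref{limitislipschitz}, which is $\lambda_0$-Lipschitz. Fix a $C^1_H$ curve $\Gamma\colon[0,1]\to\mathbb{F}$ and set $A=\{t:\Gamma(t)=\gamma(t)\}$. We argue by contradiction: assume $m(A)>0$.

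The first step is to localize to a point where $\Gamma'$ is nonzero in a good direction. By Proposition~\ref{limitishorizontal}, for a.e. $t\in A$ we have $\gamma'(t)\in\{\pm\lambda_0X_1(\gamma(t)),\pm\lambda_0X_2(\gamma(t))\}$. Almost every point of $A$ is a density point of $A$, and at such a point $\Gamma'(t)=\gamma'(t)$, since two differentiable curves agreeing on a set with density point $t$ have the same derivative there. Choose such a $t_0$. Then $|\Gamma_1'(t_0)|=\lambda_0\ge1$ or $|\Gamma_2'(t_0)|=\lambda_0\ge1$. By continuity of $\Gamma'$ there is an open interval $U\ni t_0$ on which $|\Gamma_1'|\ge\tfrac12$, or on which $|\Gamma_2'|\ge\tfrac12$.

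The second step compares $\gamma$ with $\gamma_{n+1}$ on small intervals. For each large $n$, consider the intervals $J^n_j\subset U$. By Proposition~\ref{p-inductiveconstruction}(\ref{item-segments1}), on $[a,b]=J^n_j$ the curve $\gamma_n(a)^{-1}*\gamma_{n+1}$ is one of $\rho\in\{\alpha^\pm,\beta^\pm\}$, with $\lambda=\lambda_n$, $Q=5^n$ and $b-a=N_n^{-1}$. We set $\eta=\gamma_n(a)^{-1}*\gamma$ and $\tilde\Gamma=\gamma_n(a)^{-1}*\Gamma$ on $[a,b]$. Left translation preserves horizontality and $C^1_H$. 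It also preserves the first two coordinates up to additive constants, so $|\tilde\Gamma_i'|=|\Gamma_i'|$. By left invariance and \eqref{e-unifconv},
\[
d_c(\eta(t),\rho(t))=d_c(\gamma(t),\gamma_{n+1}(t))\le 2\sum_{k\ge n+1}N_k^{-1}5^{-k}.
\]
This bound is at most $N_{n+1}^{-1}$ times a constant below $1$, since $N_k$ grows very fast; for instance $\sum_{k\ge n+1}2N_k^{-1}5^{-k}\le \tfrac{4}{5}N_{n+1}^{-1}$. Combined with \eqref{e-sillyNnbound}, it is strictly less than $\tfrac{1}{10\kappa}\bigl(\tfrac{\lambda_n(b-a)}{24\cdot5^n}\bigr)^3$. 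Proposition~\ref{badintersect} then applies and gives $m(A\cap J^n_j)\le\tfrac45 m(J^n_j)$ for every $J^n_j\subset U$.

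The third step contradicts density. Since $t_0$ is a density point of $A$, for large $n$ the interval $J^n_j$ containing $t_0$ lies in $U$. We need $m(A\cap J)/m(J)\to1$ along such intervals. The $J^n_j$ are not centered at $t_0$, but the interval $[t_0-|J|,t_0+|J|]$ contains $J$ and has length $2|J|$. Density therefore gives $m(J\setminus A)\le m([t_0-|J|,t_0+|J|]\setminus A)=o(|J|)$. Hence $m(A\cap J)\ge(1-o(1))|J|>\tfrac45|J|$ for large $n$, contradicting the second step. So $m(A)=0$.

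The main obstacle is the quantitative check in the second step. We must verify that the uniform error $d_c(\gamma,\gamma_{n+1})$ fits under the threshold of Proposition~\ref{badintersect}, tracking the tail sum against the definition of $N_{n+1}$; the factor $80\kappa$ appears designed to absorb it. We must also confirm that the hypotheses of Construction~\ref{c-construction} ($L\le1$, $1\le\lambda_n<\tfrac32$, $Q\ge5$) hold, which follows from \eqref{lambdabound}.
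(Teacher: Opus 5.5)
Your proposal is correct and follows the paper's proof. It uses the same density point $t_0$ where $\Gamma'(t_0)=\gamma'(t_0)=\lambda_0V(\gamma(t_0))$ forces $|\Gamma_1'|\ge\tfrac12$ or $|\Gamma_2'|\ge\tfrac12$ nearby, and the same left-translated application of Proposition~\ref{badintersect} to $\rho=\gamma_n(a)^{-1}*\gamma_{n+1}$ via \eqref{e-unifconv} and \eqref{e-sillyNnbound}. The only difference is in the final density step. The paper fixes $\delta$ and uses an averaging argument to find some $J^n_{j_0}\subset[t_0-\delta,t_0+\delta]$ with $m(J^n_{j_0}\setminus S)<(12N_n)^{-1}$. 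You instead take the interval $J^n_j$ containing $t_0$ and use $J\subset[t_0-|J|,t_0+|J|]$, which is equally valid and slightly shorter.
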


\begin{proof}
Let $\gamma\colon [0,1]\to\mathbb{F}$ be the curve from Proposition~\ref{limitislipschitz}, and fix a $C^{1}_H$ curve $\Gamma\colon [0,1]\to \mathbb{F}$.
We argue by contradiction. Let $S:=\{t\in (0,1): \Gamma(t)=\gamma(t)\}$ and suppose $m(S)>0$. According to Proposition~\ref{limitishorizontal}, we can choose a density point $t_{0}$ of $S$ such that $\gamma'(t_{0})$ exists and equals $\lambda_0 V(\gamma(t_0))$ for some $V\in \{\pm X_{1}, \pm X_{2}\}$. 
Note that $\Gamma'(t_{0})=\gamma'(t_{0})$. 
Indeed, since $t_0$ is a density point of $S$, we can choose a sequence $t_n \to t_0$ inside $S$ so that
\begin{align*}
    (t_n-t_0)^{-1}&\left| \Gamma(t_n) - \Gamma(t_0) - (t_n-t_0)\gamma'(t_0)\right|\\
    &=
    (t_n-t_0)^{-1}\left| \gamma(t_n) - \gamma(t_0) - (t_n-t_0)\gamma'(t_0)\right| \to 0 \text{ as } n \to \infty.
\end{align*}
We also have that either $|\Gamma_1'(t_0)| \geq \lambda_0$ or $|\Gamma_2'(t_0)| \geq \lambda_0$.
Indeed, this follows from the fact that $\Gamma'(t_0) = \gamma'(t_0) = \lambda_0 V(\Gamma(t_0))$ since one of the first two coordinates of $V(\Gamma(t_0))$ is equal to $\pm 1$ regardless of the choice of $V$.

Using the facts that $\Gamma'$ is continuous, $\lambda_0 > 1$, and $t_{0}$ is a density point of $S$, we may choose $\delta>0$ sufficiently small enough such that
\begin{enumerate}
    \item $[t_{0}-\delta,t_{0}+\delta]\subset [0,1]$,
    \item either $|\Gamma_1'| \geq \tfrac12$ on $[t_{0}-\delta,t_{0}+\delta]$ or $|\Gamma_2'| \geq \tfrac12$ on $[t_{0}-\delta,t_{0}+\delta]$,
    \item $m( [t_{0}-\delta,t_{0}+\delta] \setminus S  )<\delta/12$.
\end{enumerate}
As before, for each $n \in \mathbb{N}$, let $\{J_j^n\}_{j=1}^{N_n}$ be the partition of $[0,1]$ into $N_n$ consecutive intervals of equal length.

\medskip

\noindent {\bf Claim:} Fix $n$ large enough that $\delta N_{n}>3$. 
Then there exists some $j_0 \in \{1,\dots,N_n\}$ such that $J_{j_0}^n \subset [t_{0}-\delta,t_{0}+\delta]$ and $m( J_{j_0}^n \setminus S  )<(12N_{n})^{-1}$.

\begin{proof}[Proof of Claim]
 The interval $[t_{0}-\delta,t_{0}+\delta]$ has length $2\delta$ and the intervals $J_{j}^n$ have length $N_{n}^{-1}$. Hence $[t_{0}-\delta,t_{0}+\delta]$ contains at least $\lfloor 2\delta N_{n} \rfloor -2$ full intervals of the form $J_j^n$ for $j \in \{1,\dots,N_n\}$. Denote the set of such indices $j$ by $\mathcal{J}$, so that $|\mathcal{J}|\geq \lfloor 2\delta N_{n} \rfloor -2$. We then estimate as follows:
 \begin{align*}
 \frac{\delta}{12} > m( [t_{0}-\delta,t_{0}+\delta] \setminus S  )\geq m \left( \bigcup_{j\in \mathcal{J}} (J_j^n \setminus S) \right) =\sum_{j\in \mathcal{J}} m( J_j^n \setminus S ) \geq |\mathcal{J}|\min_{j\in \mathcal{J}}m(J_j^n\setminus S ).
 \end{align*}
 Hence, since the assumption $\delta N_{n}>3$ implies
 $2\delta N_n - 3 > \delta N_n$, we have
 \begin{align*}
\min_{j\in \mathcal{J}}m(J_j^n \setminus S ) 
&< \frac{\delta}{12|\mathcal{J}|} 
\leq 
\frac{\delta}{12 \left(2\delta N_{n}-3\right)}
<
\frac{1}{12 N_n}.
 \end{align*}
 Choosing an index $j_0 \in \mathcal{J}$ that realizes this minimum proves the claim.
\end{proof}

We use the claim to continue the proof of the proposition; fix $n$ and $j_0$ so that
\begin{equation}\label{bigger}
m\{t\in J_{j_0}^n:\Gamma(t)\neq \gamma(t)\} = m( J_{j_0}^n \setminus S  )< (12N_{n})^{-1}.
\end{equation} 
We will derive a contradiction by considering the behaviors of the curves $\gamma_{n+1}$ and $\gamma$ on $[a,b]:=J_{j_0}^n$. 
Recall from 
\eqref{e-nextstepwitheta}
that $\gamma_{n+1}|_{J_{j_0}^n} = \gamma_{n}(a)*\rho$, where $\rho$ is one of the curves $\alpha^+$, $\alpha^-$, $\beta^+$, or $\beta^-$ from Construction~\ref{c-construction}.

We will now apply Proposition~\ref{badintersect} to the curves $\eta = \gamma_n(a)^{-1} * \gamma$ and $\rho = \gamma_n(a)^{-1} * \gamma_{n+1}$ on $J_{j_0}^n = [a,b]$.
This can be done since \eqref{e-unifconv} and \eqref{e-sillyNnbound} give
\begin{align*}    
d_{c}(\eta(t),\rho(t))
=  
d_{c}(\gamma(t),\gamma_{n+1}(t))
\leq 
2\sum_{k=n+1}^\infty N_k^{-1}5^{-k}
&<\frac{2}{N_{n+1}} \sum_{k=1}^\infty 5^{-k}\\
&<
\frac{1}{10\kappa}\left( \frac{\lambda_{n} (b-a)}{24 \cdot 5^n }\right)^{3}
\end{align*}
for all $t \in [a,b]$ since $b-a=N_n^{-1}$.
Since $[a,b] \subset [t_0-\delta,t_0+\delta]$, we have either $|\Gamma_1'| \geq \tfrac12$ on $[a,b]$ or $|\Gamma_2'| \geq \tfrac12$ on $[a,b]$. 
Hence we may use Proposition~\ref{badintersect} with $\lambda = \lambda_n$, $Q = 5^n$, and $[a,b] = J_{j_0}^n$ applied to the $C^1_H$ curve
$t \mapsto \gamma_{n}(a)^{-1} * \Gamma(t)$
to conclude from \eqref{bigger} that
\begin{align*}
\frac{11}{12N_n} < 
m\{t\in J_{j_0}^n:\Gamma(t)=\gamma(t)\}
=
m\{t\in [a,b]:\gamma_{n}(a)^{-1} * \Gamma(t)=\eta(t)\} \leq \frac{4}{5N_{n}}
\end{align*}
which is a contradiction. Hence it must have been the case that $m(S)=0$, and this completes the proof.
\end{proof}

\section{A Purely $C_{H}^{1}$ 1-Unrectifiable Lipschitz Curve}\label{s-equivalence}

We now show how Theorem \ref{lusinzero} leads to Theorem~\ref{t-supermain}. In fact, we show that such an implication holds in any Carnot group, not only in $\mathbb{F}$. Throughout this section we fix an arbitrary Carnot group $\mathbb{G}$ written in either first or second exponential coordinates with respect to some fixed basis of $V_{1}$ as $\mathbb{R}^n$ with the Carnot-Carath\'{e}odory metric $d_c$. Denote by $\mathfrak{g}$ its Lie algebra and by $V_1$ the first (horizontal) layer of $\mathfrak{g}$.
We invite the reader who is not familiar with general Carnot group theory to substitute $\mathbb{F}$ for $\mathbb{G}$ with the coordinates used earlier wherever necessary.

We will prove the following result.

\begin{theorem}
    \label{t-movetounrect}
    Suppose there is a Lipschitz curve $\gamma \colon[0,1] \to \mathbb{G}$ such that 
    \begin{equation}
    \label{e-supergoal2}
m\{t\in [0,1]:\Gamma(t)=\gamma(t)\}=0
\end{equation}
    for any $C^1_H$ curve $\Gamma \colon [0,1] \to \mathbb{G}$.
    Then $\gamma([0,1])$ is purely $C^1_H$ 1-unrectifiable.
\end{theorem}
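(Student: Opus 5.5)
We argue by contraposition. Suppose $\Gamma\colon[0,1]\to\mathbb{G}$ is a $C^1_H$ curve with $\mathcal{H}^1(\gamma([0,1])\cap\Gamma([0,1]))>0$. We will produce a $C^1_H$ curve $\tilde\Gamma\colon[0,1]\to\mathbb{G}$ that agrees with $\gamma$ on a set of positive measure. The difficulty is that $\gamma$ and $\Gamma$ only share image points, not parameters, so we must reparametrize $\Gamma$ to follow $\gamma$'s parametrization on a large set.

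\emph{Step 1: set up the intersection in the domain of $\gamma$.} Let $K:=\gamma([0,1])\cap\Gamma([0,1])$, which is compact, and let $A:=\gamma^{-1}(K)\subset[0,1]$. Since $\gamma$ is Lipschitz, $\mathcal{H}^1(\gamma(A))\le \mathrm{Lip}(\gamma)\,m(A)$. As $\gamma(A)=K$, this forces $m(A)>0$.

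\emph{Step 2: restrict to good points of $\gamma$.} By Pansu's theorem (or Remark~\ref{rem-horiz} in $\mathbb{F}$), $\gamma$ is differentiable a.e. with horizontal derivative. The area formula for Lipschitz curves in metric spaces gives $\mathcal{H}^1(\gamma(\{\gamma' = 0\}))=0$. Hence we may pass to a compact $A_1\subset A$ of positive measure on which:
\begin{itemize}
\item $\gamma'$ exists,
\item the horizontal speed $|\gamma'|_H\ge c>0$, and
\item (by Lusin) $\gamma'|_{A_1}$ is continuous and $\gamma|_{A_1}$ is injective.
\end{itemize}
Injectivity can be arranged by a further restriction, using the fact that $\gamma$ is locally injective at density points with nonzero metric derivative.

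Do the same for $\Gamma$. The set $\Gamma^{-1}(\gamma(A_1))$ has positive measure, and by the area formula the portion where $\Gamma'=0$ has null image. So we may choose compact $B\subset[0,1]$ with $m(B)>0$, $\Gamma(B)\subset\gamma(A_1)$, $\mathcal{H}^1(\Gamma(B))>0$, and $\Gamma'\ne0$ on an open neighbourhood of $B$. Shrinking to a short interval $I$ where $\Gamma$ is injective with $|\Gamma'|_H$ bounded below, set $\phi:=\gamma^{-1}\circ\Gamma\colon B\cap I\to A_1$. This map is well defined by injectivity.

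\emph{Step 3: construct the reparametrization.} On $\Gamma(I)$ the inverse $\Gamma^{-1}$ is a $C^1$ diffeomorphism onto $I$, viewing $\Gamma(I)$ as a Euclidean $C^1$ embedded arc. Define $h:=\Gamma^{-1}\circ\gamma$ on $D:=\phi(B\cap I)$, which has positive measure by Step 2 and the area formula. At density points of $D$, chain-rule considerations give $h'=|\gamma'|/|\Gamma'\circ h|$ in the appropriate sense, so $h$ is differentiable a.e. on $D$ with nonzero derivative.

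By the classical Euclidean Lusin/Whitney theorem (Lemma~\ref{l-Federer}/Federer 3.1.16 applied to the real function $h$), there is a $C^1$ function $\tilde h\colon\mathbb{R}\to\mathbb{R}$ with $\tilde h=h$ on a set $D'\subset D$ of positive measure. Shrinking further if necessary, $\tilde h$ takes values in $I$ near $D'$; extend $\Gamma$ to a $C^1_H$ curve on $\mathbb{R}$ so that this is harmless. Then $\tilde\Gamma:=\Gamma\circ\tilde h$ is $C^1$, since it is a composition of $C^1$ maps. It is horizontal, because $\tilde\Gamma'=\tilde h'\,\Gamma'(\tilde h)\in V_1$. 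Finally $\tilde\Gamma=\Gamma\circ h=\gamma$ on $D'$. After affinely rescaling to $[0,1]$ and restricting to an interval containing a positive-measure piece of $D'$, this contradicts \eqref{e-supergoal2}.

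The main obstacle is Step 3's passage from shared image to shared parameter: making $h$ well defined and Lusin-approximable by $C^1$ functions. The plan is to use the one-dimensional fact that a real function differentiable at every point of a measurable set agrees with a $C^1$ function on a subset of positive measure (Federer 3.1.16). Its hypotheses hold because $h$ is the composition of the Lipschitz map $\gamma$ with the $C^1$ inverse chart $\Gamma^{-1}$ on the embedded arc.
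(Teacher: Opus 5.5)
Your proposal is correct in substance and follows the paper's route. You use the area formula to find a short interval $I$ on which $\Gamma'\neq 0$, so that $\Gamma|_I$ is bi-Lipschitz and $\Gamma(I)$ meets $\gamma([0,1])$ in positive $\mathcal{H}^1$-measure. You then apply the Lusin--Whitney lemma to the Lipschitz real function $(\Gamma|_I)^{-1}\circ\gamma$ on a positive-measure set, and set $\tilde\Gamma=\Gamma\circ\tilde h$ with $\Gamma$ extended to $\mathbb{R}$.

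The $\gamma$-side preparation in your Step~2 is unnecessary, and the injectivity claim there is not justified as stated: a nonzero metric derivative at a point does not make $\gamma$ injective near that point without a uniform, Egorov-type estimate. The paper avoids this by taking the preimage $K=\gamma^{-1}(\Gamma(Z))$, which has positive measure simply because $\gamma$ is Lipschitz. Likewise, no affine rescaling is needed at the end, and it would break the matching of parameters. Since $D'\subset[0,1]$, you can just restrict $\tilde\Gamma$ to $[0,1]$.
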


Note that whether a Lipschitz curve $\gamma$ satisfying \eqref{e-supergoal2} exists depends on the Carnot group $\mathbb{G}$. For example, in the Heisenberg group or the Engel group, there is no such curve \cite{SZ25,Zim18}. However, in Theorem~\ref{lusinzero} above, we showed that such a curve does exist in $\mathbb{F}$.

Before proving Theorem \ref{t-movetounrect}, we establish several facts that will be useful in the proof.
The first is the classical Lusin approximation of a Lipschitz function on $\mathbb{R}$ by a $C^1$ function.
See \cite[6.6.1 Theorem 1]{EG15} or \cite[Theorem~3.1.16]{Fed69} for a reference.

\begin{lemma}
\label{l-Federer}
    Suppose $K \subset \mathbb{R}$ is measurable with $m(K)>0$ and $f\colon K \to \mathbb{R}$ is Lipschitz. 
    Then there is a compact set $A \subset K$ with $m(A)>0$ 
    and a $C^1$ function $\varphi\colon \mathbb{R} \to \mathbb{R}$ such that $\varphi(x)=f(x)$ for all $x \in A$.
\end{lemma}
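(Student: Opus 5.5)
This is the classical Lusin-type theorem for a real Lipschitz function. I would prove it by combining three standard facts: Kirszbraun/McShane extension, Rademacher's theorem, and Whitney's extension theorem.

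\emph{Step 1: extend and differentiate.} Extend $f$ to a Lipschitz function $F\colon\mathbb{R}\to\mathbb{R}$ with the same constant, for instance by the McShane formula $F(x)=\inf_{y\in K}(f(y)+\mathrm{Lip}(f)|x-y|)$. By Rademacher's theorem (for real functions, Lebesgue's theorem on absolutely continuous functions), $F'$ exists almost everywhere and is bounded and measurable. Without loss of generality I would replace $K$ by a bounded subset of positive measure.

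\emph{Step 2: find a compact set with uniform control.} Apply Lusin's theorem to $F'$ to get a compact $A_0\subset K$ with $m(A_0)>0$ on which $F'$ exists and $F'|_{A_0}$ is continuous. Next define
\[
\omega_k(x)=\sup_{0<|h|<1/k}\frac{|F(x+h)-F(x)-F'(x)h|}{|h|}.
\]
These are measurable, since the supremum may be taken over rational $h$ by continuity of $F$, and $\omega_k\to0$ pointwise on $A_0$. Egorov's theorem then gives a compact $A\subset A_0$ with $m(A)>0$ on which $\omega_k\to0$ uniformly. This uniform little-$o$ estimate is the key condition.

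\emph{Step 3: verify Whitney's condition.} I need to show that
\[
R(x,y)=\frac{F(y)-F(x)-F'(x)(y-x)}{|y-x|}\to0
\]
uniformly as $|y-x|\to0$ with $x,y\in A$. For $|y-x|<1/k$ we have $|R(x,y)|\le\omega_k(x)$, and the uniform convergence from Step 2 gives exactly this. Together with the continuity of $F'|_A$, the pair $(F|_A,F'|_A)$ is a Whitney $1$-jet on the compact set $A$.

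\emph{Step 4: extend.} The $C^1$ Whitney extension theorem then produces $\varphi\in C^1(\mathbb{R})$ with $\varphi=F=f$ on $A$. In one dimension this can also be done by hand. Let $a=\min A$ and $b=\max A$, and write $[a,b]\setminus A$ as a countable union of open gaps $(c,d)$. On each gap, interpolate with the cubic Hermite polynomial that matches the values and derivatives at $c$ and $d$. Outside $[a,b]$, extend linearly with the endpoint slopes.

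The main obstacle is checking that this hand-made $\varphi$ is $C^1$ at accumulation points of $A$. Near such a point, both the oscillation of the derivative of the cubic on a short gap and its deviation from the endpoint data must tend to zero. Both follow from the uniform bound on $R$ and the uniform continuity of $F'|_A$: on a gap of length $\ell$, the cubic's derivative differs from $F'(c)$ by at most a constant times the sum of $\sup|R|$ over pairs at distance $\le\ell$ and the oscillation of $F'|_A$ at scale $\ell$. The same bounds give differentiability of $\varphi$ at points of $A$. If I prefer not to carry out this bookkeeping, I would simply cite Whitney's theorem, as \cite[Theorem~3.1.16]{Fed69} does.
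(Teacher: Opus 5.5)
Your proposal is correct and follows essentially the same route as the paper: extend $f$ to a Lipschitz $F$, use Rademacher, then Lusin and Egorov on the remainder quotients to get a compact $A$ on which $F'$ is continuous and the Whitney $C^1$ condition holds uniformly, and finish with Whitney's extension theorem. Your optional hand-built cubic Hermite extension is a valid one-dimensional substitute for citing Whitney. Your estimate $|p'(t)-F'(c)| \le C\bigl(|R(c,d)| + |F'(d)-F'(c)|\bigr)$ on a gap $(c,d)$ does hold, and it gives $C^1$ regularity at accumulation points of $A$.
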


\begin{proof}
Extend $f$ to a Lipschitz function $F\colon \mathbb{R} \to \mathbb{R}$.
By Rademacher's Theorem,
$F'$ exists almost everywhere.
Applying 
Lusin's (classical) Theorem to $F'$ on $K$
and also
applying Egorov's theorem to the difference quotients 
$$
\psi_n(x) := \sup_{y \in K \cap B(x,\frac{1}{n})} \left| \frac{F(y) - F(x) - (y-x)F'(x)}{y-x}\right|
$$
provides a compact subset $A \subset K$ with $m(A)>0$ on which $F'$ is continuous and such that 
$
\frac{F(y) - F(x)}{y-x}
$
converges uniformly to $F'$ on $A$.
Whitney's Extension Theorem \cite{Whi34} then guarantees that there exists a $C^1$ function $\varphi \colon \mathbb{R}\to \mathbb{R}$
such that, for all $x \in A$, we have
$\varphi(x) = F(x) = f(x)$.
\end{proof}


Suppose $I\subset \mathbb{R}$ is an interval and $\gamma\colon I \to \mathbb{G}$ is a Lipschitz curve.
For the rest of this section, we will write 
$|\gamma'|_H$ 
and $|\gamma'|_{\mathbb{R}^n}$ to denote the horizontal and Euclidean speeds
of $\gamma$ respectively when they exist. We point out that when $\mathbb{G}=\mathbb{F}$, we have $n=5$ and 
$|\gamma'|_H = \sqrt{(\gamma_1')^2+(\gamma_2')^2}$ a.e. as in Lemma~\ref{lifting}. We also recall that the Hausdorff measure $\mathcal{H}^1$ is defined with respect to the CC-metric $d_c$.

\begin{remark}\label{extensiontoR}
Suppose $\Gamma\colon [0,1]\to \mathbb{G}$ is a $C_{H}^{1}$ curve. By extension using horizontal lines, we can extend $\Gamma$ to a curve defined on $\mathbb{R}$ that is both $C_{H}^{1}$ and Lipschitz.
More precisely, let $Z_0,Z_1 \in V_1 \subset \mathfrak{g}$ be the vectors satisfying $\Gamma'(0)=Z_0(\Gamma(0))$ and $\Gamma'(1)=Z_1(\Gamma(1))$. We extend $\Gamma$ using the formula
\[
\Gamma(t) = 
\begin{cases}
    \Gamma(0)*\exp(tZ_0) & \text{if } t\leq0\\
    \Gamma(1)*\exp((t-1)Z_1) & \text{if } t \geq 1.
\end{cases}
\]
Using the definition of the exponential map, 
it follows that $\Gamma$ is $C_{H}^{1}$ on $\mathbb{R}$. By Lemma \ref{l-C1HisLip} it then follows that $\Gamma$ is Lipschitz on $\mathbb{R}$.
\end{remark}

The final tool we need will be the area formula of Kirchheim for maps from Euclidean sets into arbitrary metric spaces. We will not need its full strength, so we provide the simplified statement that is relevant in our context in the proof below.

\begin{lemma}\label{applyareaformula}
Let $\eta\colon \mathbb{R}\to \mathbb{G}$ be a Lipschitz curve in $\mathbb{G}$ and $E\subset \eta(\mathbb{R})$ be a Borel set such that $\mathcal{H}^{1}(E)>0$. 
Then the set $T:=\{t \in \eta^{-1}(E) : \eta'(t) \mbox{ exists and }\eta'(t) \neq 0\}$ is Borel measurable and has strictly positive Lebesgue measure.
\end{lemma}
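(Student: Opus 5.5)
The plan is to deduce the lemma from Kirchheim's area formula for Lipschitz maps from $\mathbb{R}$ into a metric space, and then to identify the metric derivative with the horizontal speed. Recall that for a Lipschitz $\eta\colon\mathbb{R}\to(\mathbb{G},d_c)$ the metric derivative
\[
\mathrm{md}\,\eta(t):=\lim_{h\to 0}\frac{d_c(\eta(t+h),\eta(t))}{|h|}
\]
exists for almost every $t$. Kirchheim's formula states that for every Lebesgue measurable $A\subset\mathbb{R}$,
\[
\int_A \mathrm{md}\,\eta(t)\,dt=\int_{\mathbb{G}} \#\bigl(A\cap\eta^{-1}(y)\bigr)\,d\mathcal{H}^1(y).
\]

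\textbf{Measurability.} Since $\eta$ is continuous and $E$ is Borel, $\eta^{-1}(E)$ is Borel. By \eqref{disugdist}, $\eta$ is Euclidean Lipschitz on compact intervals. The set $D$ of points where $\eta'$ exists is Borel: differentiability can be written by comparing upper and lower limits of difference quotients along rational increments, using continuity of $\eta$. On $D$ the derivative is a pointwise limit of continuous difference quotients, hence Borel. So $T=\eta^{-1}(E)\cap D\cap\{\eta'\neq 0\}$ is Borel.

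\textbf{Positivity.} Apply the formula with $A=\eta^{-1}(E)$. Since $E\subset\eta(\mathbb{R})$, every $y\in E$ has at least one preimage in $A$. Hence
\[
\int_{\eta^{-1}(E)}\mathrm{md}\,\eta\;\geq\;\mathcal{H}^1(E)>0 .
\]
Therefore the set $\{t\in\eta^{-1}(E):\mathrm{md}\,\eta(t)>0\}$ has positive Lebesgue measure. It remains to show that, up to a null set, $\mathrm{md}\,\eta(t)>0$ forces $\eta'(t)$ to exist and be nonzero.

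\textbf{Main step: $\mathrm{md}\,\eta=|\eta'|_H$ a.e.} By Remark~\ref{rem-horiz}, $\eta$ is horizontal, so for a.e. $t$ we have $\eta'(t)=\sum_i u_i(t)X_i(\eta(t))$ with $u$ locally integrable. Here $X_1,\dots,X_m$ is the fixed basis of $V_1$, and these vector fields are pointwise linearly independent. Consequently, wherever $\eta'(t)$ exists and this representation holds, $\eta'(t)=0$ if and only if $u(t)=0$, i.e. $|\eta'|_H(t)=0$.

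So it suffices to show $\mathrm{md}\,\eta(t)=|u(t)|$ for a.e. $t$. I would take this from Pansu's theorem \cite{Pan89}: a Lipschitz curve is Pansu differentiable a.e., with Pansu derivative the horizontal vector $u(t)$. Then
\[
d_c\bigl(\eta(t),\eta(t+h)\bigr)=d_c\bigl(0,\delta_h \exp(u(t))\bigr)+o(h)=|h|\,|u(t)|+o(h),
\]
using $d_c(0,\exp(v))=|v|$ for horizontal $v$, as in Remark~\ref{rem-distance}.

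If one prefers an elementary route, the upper bound $\mathrm{md}\,\eta\le|u|$ already follows from the length estimate $d_c\le\int|u|$ at Lebesgue points of $|u|$. That bound alone is enough: $\mathrm{md}\,\eta(t)>0$ then forces $u(t)\neq 0$, hence $\eta'(t)\neq 0$.

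Combining the three steps, $T$ contains $\{t\in\eta^{-1}(E):\mathrm{md}\,\eta(t)>0\}$ up to a null set, so $m(T)>0$. I expect the main obstacle to be only the careful citation of Kirchheim's formula and this a.e. comparison between $\mathrm{md}\,\eta$ and $|u|$. Everything else is routine measure theory.
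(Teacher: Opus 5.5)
Your proposal is correct and follows essentially the same route as the paper: Kirchheim's area formula gives $\int_{\eta^{-1}(E)} md(\eta) \geq \mathcal{H}^1(E) > 0$, and the a.e.\ bound $md(\eta) \leq |\eta'|_H \leq |\eta'|_{\mathbb{R}^n}$, obtained from $d_c \leq \ell$ together with Lebesgue differentiation, then forces $\eta' \neq 0$ on a subset of $\eta^{-1}(E)$ of positive measure. As you anticipated, the appeal to Pansu differentiability is unnecessary; the paper uses only this elementary upper bound, phrased as a contradiction with the assumption $m(T)=0$.
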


\begin{proof} First notice that $T$ is Borel because $\eta^{-1}(E)$ is the continuous preimage of a Borel set and $\eta'$ is Borel measurable.
Recall the {\em metric derivative} $md(\eta)$ defined by
\[
md(\eta)(t) := \lim_{h \to 0} \frac{d_c(\eta(t+h),\eta(t))}{h},
\] 
which exists for almost every $t \in \mathbb{R}$ as $\eta$ is Lipschitz \cite[Theorem 4.1.6]{AT04}.
We apply the area formula of Kirchheim \cite[Theorem 7]{K94}. In our situation, this implies the following: for every Borel set $E\subset \eta(\mathbb{R})$,
\begin{equation}
\label{e-areaform}
\int_{\eta^{-1}(E)} md(\eta) (t) dt
=\int_{\mathbb{G}} \#(\eta^{-1}(p) \cap \eta^{-1}(E)) \, d \mathcal{H}^1(p)
\geq \mathcal{H}^1(E).
\end{equation}
To see the right hand side inequality, observe that, if $p\in E\subset \eta(\mathbb{R})$, then $p=\eta(t)$ for some $t\in \eta^{-1}(p)\cap \eta^{-1}(E)$. This implies $\#(\eta^{-1}(p) \cap \eta^{-1}(E))\geq 1$.

For almost every $t \in I$, the definition of the metric $d_c$ and the 
Lebesgue differentiation theorem 
applied to the function $|\eta'|_{H} \in L^1_{loc}(\mathbb{R})$
give
\begin{equation}
\label{e-metricderiv}
\begin{aligned}
md(\eta)(t) = \lim_{h\searrow 0} \frac{d_c(\eta(t+h),\eta(t))}{h}
\leq
\lim_{h\searrow 0}\frac{1}{h} \ell(\eta|_{[t,t+h]})
&=
\lim_{h\searrow 0}\frac{1}{h}\int_t^{t+h} |\eta'|_H\\
&= |\eta'(t)|_H
\leq |\eta'(t)|_{\mathbb{R}^n}.
\end{aligned}
\end{equation}
To conclude, suppose for a contradiction that $m(T)=0$. Then $\eta'(t)=0$ for almost every $t\in \eta^{-1}(E)$. Hence by \eqref{e-areaform} and \eqref{e-metricderiv}, we must have $\mathcal{H}^{1}(E)=0$, which violates our hypothesis.
\end{proof}

Before proving Theorem \ref{t-movetounrect} we briefly describe the proof.
Given $\gamma$ and $\Gamma$ as in the statement, let $E$ be the set of points in the target where these curves overlap. Our goal is to show that $\mathcal{H}^1(E)=0$. 
If this was not the case, then Lemma~\ref{applyareaformula} would imply that $\Gamma'$ is bounded away from 0 on an appropriate interval, and hence $\Gamma$ would be bi-Lipschitz into $E$ on a set of positive measure $Z$.
If $\gamma$ coincided with $\Gamma$ on $Z$, this would be a contradiction. However, there is no guarantee that these curves meet within the same time-frame i.e. that $\gamma(Z) \subset E$. To overcome this, we apply Lemma~\ref{l-Federer} to the Lipschitz map $\Gamma^{-1} \circ \gamma$ on a positive measure set $A \subset \mathbb{R}$ to smoothly reparameterize $\Gamma$ and force $\gamma$ to coincide with this new curve on $A$. This contradicts our assumption on $\gamma$ and completes the proof.

\begin{proof}[Proof of Theorem~\ref{t-movetounrect}]
Suppose
$\gamma\colon [0,1] \to \mathbb{G}$ is a Lipschitz curve such that \eqref{e-supergoal2} holds for all $C^1_H$ curves $\Gamma\colon [0,1] \to \mathbb{G}$.
Fix a $C^1_H$ curve $\Gamma\colon [0,1] \to \mathbb{G}$. Using Remark \ref{extensiontoR}, we can extend $\Gamma$ to a curve defined on $\mathbb{R}$ that is both $C^{1}_{H}$ and Lipschitz. Let $E = \gamma([0,1]) \cap \Gamma(\mathbb{R})$.
It suffices to show that $\mathcal{H}^1(E)=0$.

Suppose instead that $\mathcal{H}^1(E)>0$. Clearly $E\subset \Gamma(\mathbb{R})$ and $E$ is Borel because $\gamma$ and $\Gamma$ are continuous. By Lemma~\ref{applyareaformula}, the set
\[
T:=\{t \in \Gamma^{-1}(E) : \Gamma'(t) \mbox{ exists and }\Gamma'(t) \neq 0\}
\]
is Borel measurable and has strictly positive Lebesgue measure.
Hence we can choose $t_{0}\in T$ that is a density point of $T$. 

Next, we choose an appropriate interval $I$ on which $\Gamma|_I$ is bi-Lipschitz.
To do this, use the fact that $\Gamma'$ is continuous and $\Gamma'(t_0)\neq 0$ to choose a compact interval $I$ of positive length containing $t_{0}$ 
such that $|\Gamma_j'| \geq c_1$ on $I$ for some $c_1>0$ and some $j \in \{1,\dots,n\}$.
Since $\Gamma(I)$ is compact, there is a constant $c_2>0$ such that
$
d_c(\Gamma(t),\Gamma(s))\geq c_2|\Gamma(t)-\Gamma(s)|
$
for all $s,t \in I$. Note that when $\mathbb{G} = \mathbb{F}$ this follows from \eqref{disugdist}. For a proof in general Carnot groups, see the references after \eqref{disugdist}. 
Thus for any points $a<b$ in $I$, we have
$$
c_2^{-1} d_c(\Gamma(b),\Gamma(a)) \geq |\Gamma(b)-\Gamma(a)| \geq |\Gamma_j(b)-\Gamma_j(a)|= \left| \int_a^b \Gamma_j'\right| = \int_a^b |\Gamma_j'|
\geq c_1(b-a).
$$

Since $t_0$ is a density point of $T$, the intersection $Z := I \cap T \subset \Gamma^{-1}(E)$ has positive Lebesgue measure, so
$\mathcal{H}^1(\Gamma(Z))>0$ as $\Gamma|_{I}$ is bi-Lipschitz. 
Set $K:= \gamma^{-1}(\Gamma(Z))$. This set is Borel. Indeed, $\Gamma|_Z$ is bi-Lipschitz, so 
$\Gamma^{-1}|_{\Gamma(Z)}$ is continuous. Thus, since $Z$ is Borel, $\Gamma(Z) = (\Gamma^{-1}|_{\Gamma(Z)})^{-1}(Z)$ is Borel, and hence $K$ is Borel since $\gamma$ is also continuous. Further, since $\Gamma(Z)\subset \gamma([0,1])$ we have $\gamma(K)=\Gamma(Z)$.
Therefore, since $\gamma$ is Lipschitz,
it follows that $m(K)>0$.


Notice $\Gamma^{-1}\circ \gamma$ is well defined and Lipschitz on $K$.
Hence by Lemma~\ref{l-Federer},
there is a compact subset $A \subset K$ with $m(A)>0$
and a $C^1$ function $\varphi\colon \mathbb{R} \to \mathbb{R}$
such that $\varphi(x) = \Gamma^{-1}(\gamma(x))$ for all $x \in A$.
Now define $\tilde{\Gamma}\colon  \mathbb{R} \to \mathbb{G}$ by $\tilde{\Gamma} = \Gamma \circ \varphi$, noting that $\tilde{\Gamma}$ is well defined since $\Gamma$ was extended to have domain $\mathbb{R}$.
Observe that $\tilde{\Gamma}$ is $C^1_H$. To see this, write $\Gamma'(t) = X_t(\Gamma(t))$ for each $t \in \mathbb{R}$, where $X_t \in V_1 \subset \mathfrak{g}$. 
Note that $X_t(\Gamma(t))$ is continuous in $t$ since $\Gamma'$ is continuous.
Then for any  $t \in \mathbb{R}$, 
\[
\tilde{\Gamma}'(t) = \varphi'(t)\Gamma'(\varphi(t))
 = \varphi'(t) X_{\varphi(t)}(\Gamma(\varphi(t))) \in V_{1},
\]
and $\tilde{\Gamma}'$ is continuous since $\varphi \in C^1(\mathbb{R})$.
Moreover for all $t\in A$, since $\varphi|_{A} = (\Gamma^{-1} \circ \gamma)|_A$,
\[
\tilde{\Gamma}(t) = \Gamma(\varphi(t)) = \Gamma(\Gamma^{-1}(\gamma(t)))= \gamma(t).
\]
However, applying \eqref{e-supergoal2} with the $C_{H}^{1}$ curve $\tilde{\Gamma}$ gives
\begin{align*}
    0 = m \{ t \in [0,1] : \tilde{\Gamma}(t) = \gamma(t)\}
    &\geq
    m (A) > 0,
\end{align*}
which is a contradiction.
Therefore, $\mathcal{H}^1(E)=0$ and the proof is complete.
\end{proof}

\begin{proof}[Proof of Theorem~\ref{t-supermain}]
    Combining Theorem~\ref{lusinzero} and Theorem~\ref{t-movetounrect} suffices.
\end{proof}

\printbibliography

\end{document}